\documentclass[11pt]{article}
\usepackage[T1]{fontenc}
\usepackage[utf8]{inputenc}
\usepackage{lmodern}
\usepackage{amsmath,amssymb,amsthm,mathtools}
\usepackage[margin=2.4cm]{geometry}
\usepackage{booktabs}
\usepackage{parskip}           
\usepackage[backref=page]{hyperref}
\hypersetup{colorlinks=true,linkcolor=blue,urlcolor=blue}

\hypersetup{hidelinks}

\newtheorem{theorem}{Theorem}[section]
\numberwithin{equation}{section}
\newtheorem{proposition}[theorem]{Proposition}
\newtheorem{lemma}[theorem]{Lemma}
\newtheorem{corollary}[theorem]{Corollary}
\newtheorem{introthm}{Theorem}            

\theoremstyle{definition}

\newtheorem{remark}[theorem]{Remark}
\newtheorem{example}[theorem]{Example}

\newcommand{\YB}{\mathcal{YB}}

\newcommand{\UG}{\mathcal{UG}}

\usepackage{enumerate}          
\newcommand{\selabel}[1]{\label{se:#1}}

\newcommand{\sselabel}[1]{\label{sse:#1}}
\newcommand{\sseref}[1]{Subsection~\ref{sse:#1}}
\def\thlabel#1{\label{th:#1}}
\def\thref#1{Theorem~\ref{th:#1}}

\newcommand{\lelabel}[1]{\label{le:#1}}
\newcommand{\leref}[1]{Lemma~\ref{le:#1}}
\newcommand{\prlabel}[1]{\label{pr:#1}}
\newcommand{\prref}[1]{Proposition~\ref{pr:#1}}
\newcommand{\colabel}[1]{\label{co:#1}}
\newcommand{\coref}[1]{Corollary~\ref{co:#1}}
\newcommand{\relabel}[1]{\label{re:#1}}
\newcommand{\reref}[1]{Remark~\ref{re:#1}}
\newcommand{\exlabel}[1]{\label{ex:#1}}
\newcommand{\exref}[1]{Example~\ref{ex:#1}}

\newcommand{\eqlabel}[1]{\label{eq:#1}}
\newcommand{\equref}[1]{(\ref{eq:#1})}

\newtheorem{remarks}[theorem]{Remarks}

\newcommand{\bZ}{\mathbb{Z}}

\title{Every group is the automorphism group of a set-theoretic solution of the Yang--Baxter equation}
\author{G. Militaru}
\date{}

\begin{document}
\maketitle

\begin{abstract}
We give an affirmative answer to the \emph{abstract group problem} for the category $\YB$ of all set-theoretic solutions of the Yang--Baxter (YB) equation: every group is isomorphic to the automorphism group of a YB-solution, which can be chosen on a finite set if the group is finite. In the terminology of L\'aszl\'o Babai, this means that the category $\YB$ is \emph{universal} and \emph{finitely universal}, which is an analogue of Frucht's classical theorem for graphs. Furthermore, the YB-solutions can be chosen within three very small classes: the solutions of the Frobenius-Separability equation, the solutions of the Baaj--Long--Skandalis equation and the recently introduced bipartite solutions. The proof is based on new constructions which associate to any graph a YB-solution in each of these classes, with the same automorphism group as the graph. Explicit YB-solutions with automorphism groups $C_n$, $S_n$, $D_n$ and $Q_8$ are given, as well as a unitary YB-solution, constructed directly from a field $K$, whose automorphism group is ${\rm Aut} (K)$.
\end{abstract}

\medskip

\noindent {\em Key words: set-theoretic Yang--Baxter equation; braid equation; automorphism group; representation problem; graphs with given automorphism group; Frucht's theorem; universal categories}

\vspace{.3cm}

\noindent{MSC 2020: Primary 16T25, 05C25; Secondary 20B25, 20B27, 18B15, 08A35}


\section*{Introduction}

The \emph{representation problem}, one of the classical themes of algebra and combinatorics, asks which abstract groups occur as the full automorphism group of an object $\mathcal{O}$ of a given category $\mathcal{C}$. In this case we say that the object $\mathcal{O}$ \emph{represents} the given group \cite{Babai1981}. Following Babai \cite[Section 4.1]{Babai1995}, the category $\mathcal{C}$ is called \emph{universal} (respectively, \emph{finitely universal}) if every group (respectively, every finite group) is represented by an object (respectively, by a ``finite'' object) of $\mathcal{C}$. The problem was formulated by K\H{o}nig \cite{Kon} ninety years ago for the category of graphs, and it was answered affirmatively by Frucht \cite{Frucht39}: every finite group is isomorphic to the automorphism group of a finite graph. This remarkable theorem was later extended to arbitrary groups, possibly infinite, independently by de Groot \cite{deGroot59} and Sabidussi \cite{Sabidussi60}.

The representation problem is not specific to graph theory. Frucht later studied the same problem for posets and lattices \cite{Frucht1950poset, Frucht1950lattice}. These early results made it clear that the question is a general one: to represent abstract groups as automorphism groups of objects of a prescribed category. Since then, the problem has been studied continuously, its systematic study being closely associated with Babai, whose surveys \cite{Babai1981, Babai1995} have become standard references in the field. Babai also emphasized \emph{quantitative} questions: once the existence of an object $X$ with ${\rm Aut} (X) \cong G$ is known, how ``small'' can $X$ be, and how restricted can the class of admissible objects be? In this direction he obtained substantial improvements over the early constructions.

A categorical extension of the subject was developed by Hedrl\'in, Pultr, Trnkov\'a, Vop\v{e}nka and their collaborators; the standard reference is the monograph \cite{PultrTrnkova80}. There the question above appears as the \emph{abstract group problem}, together with the \emph{abstract monoid problem}, in which automorphism groups are replaced by endomorphism monoids \cite[Introduction, Section 2]{PultrTrnkova80}.

The representation problem has also been studied for algebraic objects. Gordeev and Popov \cite{GordeevPopov2003} proved that, over suitable fields, every linear algebraic group is the automorphism group of a finite-dimensional simple algebra, not necessarily associative. More recently, a quantum version of Frucht's theorem was proved: every finite quantum group is the quantum automorphism group of a finite quantum graph \cite{BGMSW25}. The subject remains very active today, as it has been throughout its history (see, for instance, \cite{Barbieri25, BarmakBarreto24, BasicFowler25, Cuypers26, MorrisSpiga2025} and the references therein).

The quantum Yang--Baxter equation first appeared in theoretical physics, in the work of Yang \cite{Yang} on many-body problems and of Baxter \cite{baxter} on exactly solvable models of statistical mechanics. It later became a central object in the theory of quantum groups and Hopf algebras, and it is closely related to braid groups, invariants of knots and braided monoidal categories. Drinfel'd \cite{dr} proposed the study of its set-theoretic solutions, i.e. of the maps $R \colon X \times X \to X \times X$, where $X$ is a set, which satisfy the Yang--Baxter equation. This problem has a strong combinatorial flavour, and over the last thirty years it has been studied intensively; for a detailed account we refer to the monograph \cite{BES}.

Our starting point is \cite[Remark 4.5]{acm1}, where Agore, Chirvasitu and the author asked which groups are of the form ${\rm Aut} (X, \, R)$, for some solution $R \colon X\times X \to X\times X$ of the set-theoretic Frobenius-Separability equation $R^{12}R^{23} = R^{23}R^{13} = R^{13}R^{12}$, a special class of solutions of the braid equation. We had in mind, of course, the classical results on the representation problem and the following observation of Babai \cite[Section 4.1]{Babai1995}: ``Take almost any \emph{reasonably broad} class of combinatorial or algebraic structures; the class will be universal.'' The words ``reasonably broad'' are essential here: the category of groups is not universal, since no group has the cyclic group of order $3$ as its automorphism group \cite[Section 2]{Babai1981}. Moreover, the class of left non-degenerate FS-solutions is neither universal nor finitely universal: their automorphism groups are exactly the centralizers of single maps \cite[Theorem 2.19]{acm1}, and a non-trivial finite group which admits no surjective morphism onto a non-trivial cyclic group is not of this form \cite[Theorem 4.2(2)]{acm1}. Therefore, deciding whether a given category, or class of objects, is (finitely) universal remains an interesting problem.

Graphs have already been used in the study of set-theoretic YB-solutions, mostly in the opposite direction: already in \cite[Section 1.2]{etingof}, solutions are used to color the edges of $4$-valent flat graphs; in \cite{GIM}, a labelled graph is associated to a solution and used to compute its automorphism group, while in \cite{Yang16} the solutions are related to higher-rank graphs. At the categorical level, this direction is well understood: the category $\UG$ of undirected graphs is alg-universal \cite[Chapter IV, Theorem 3.1]{PultrTrnkova80}; hence the category $\YB$ of all YB-solutions, which is a variety of algebras \cite{CMv1} and in particular
an algebraic category in the sense of \cite[Chapter I, 8.4]{PultrTrnkova80}, admits a full embedding, i.e. a full and faithful functor which is injective on objects,
\begin{equation}\eqlabel{YBinUG}
\iota \colon \YB \hookrightarrow \UG.
\end{equation}
Roughly speaking, \equref{YBinUG} allows us to regard $\YB$ as a full subcategory of $\UG$; this suggests that the tools of graph theory may also be useful in the study of the Yang--Baxter equation. In this direction, it was shown in \cite[Theorem 2.19]{acm1} that the left non-degenerate FS-solutions are parametrized by functional digraphs (essentially those of Harary \cite{harary}), and that their automorphism groups are the automorphism groups of these digraphs. To the best of our knowledge, solutions of the Yang--Baxter equation associated to arbitrary graphs have not been considered before. At a more general, categorical level, we are interested in the existence of a functor $F \colon \UG \to \YB$ which is full and essentially surjective, or even a full retraction of the embedding \equref{YBinUG}, i.e. such that $F \circ \iota \cong {\rm Id}_{\YB}$.

In this paper we address the representation problem for the category of all set-theoretic solutions of the Yang--Baxter equation (respectively, of the braid equation), and we show that the answer is positive, already inside three very small classes of solutions (see \cite[Section 3]{MilBIP}). The strategy is a natural one: to any graph $\Gamma = (V, \, E)$ with at least one edge we associate
\begin{itemize}
\item an FS-solution $R_\Gamma$, and hence a solution of the braid equation, on the set $(V \times E) \sqcup \{x_{v, e} \, | \, v \in e\}$ (\sseref{FS});
\item a solution $R'_\Gamma$ of the Baaj--Long--Skandalis equation, and hence of the Yang--Baxter equation, on a set with $2 |V|$ elements (\sseref{BLS});
\item a bipartite solution $R''_\Gamma$ in the sense of \cite{MilBIP}, in particular an involutive solution of the braid equation, on the set $E \sqcup V$ (\sseref{bij}).
\end{itemize}
In all three cases the automorphism group of the solution is isomorphic to ${\rm Aut} (\Gamma)$ (\thref{autFS}, \thref{autBLS} and \thref{autbij}). Combined with the theorems of Frucht, de Groot and Sabidussi,
this gives \thref{mainth}:

\begin{introthm}
Any group $G$ is isomorphic to the automorphism group of a set-theoretic solution of the Yang--Baxter equation, and to the automorphism group of a set-theoretic solution of the braid equation. If $G$ is finite, then the solutions can be chosen on finite sets. Moreover:
\begin{enumerate}[(1)]
\item the solution of the braid equation can be chosen to be a solution of the Frobenius-Separability equation;
\item the solution of the Yang--Baxter equation can be chosen to be an idempotent solution of the Baaj--Long--Skandalis equation;
\item the solution of the braid equation can be chosen to be a bipartite solution, in particular an involutive one, and the solution of the Yang--Baxter equation can be chosen to be unitary.
\end{enumerate}
\end{introthm}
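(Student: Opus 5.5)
The plan is to reduce everything to graphs and then transport the automorphism group through the three constructions announced above. By Frucht's theorem, every finite group $G$ is ${\rm Aut}(\Gamma)$ for a finite graph $\Gamma$. By de Groot and Sabidussi, every group $G$ is ${\rm Aut}(\Gamma)$ for some graph $\Gamma$. We may assume $\Gamma$ has at least one edge. If it has none, we replace $\Gamma$ by its complement: this has the same automorphism group, and it has an edge as soon as $|V|\ge 2$. When $|V|\le 1$ the group is trivial, and we instead take a single-edge graph on two vertices with a pendant path making it asymmetric (for instance, the asymmetric graph on $6$ vertices). Then \thref{autFS}, \thref{autBLS} and \thref{autbij} give solutions $R_\Gamma$, $R'_\Gamma$ and $R''_\Gamma$ whose automorphism groups are isomorphic to ${\rm Aut}(\Gamma)\cong G$. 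If $\Gamma$ is finite, the underlying sets $(V\times E)\sqcup\{x_{v,e}\}$, a set of $2|V|$ elements, and $E\sqcup V$ are all finite.

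Next, I would match each item with its class.
\begin{enumerate}[(1)]
\item $R_\Gamma$ is an FS-solution. Every FS-solution is a braid solution, since $R^{12}R^{23}R^{12}=R^{23}R^{13}R^{12}=R^{23}R^{12}R^{23}$ by applying the FS identities twice.
\item $R'_\Gamma$ is an idempotent BLS-solution. A BLS-solution $R^{12}R^{13}R^{23}=R^{23}R^{12}$ with $R$ idempotent satisfies the Yang--Baxter equation $R^{12}R^{13}R^{23}=R^{23}R^{13}R^{12}$, by the standard argument recalled in \sseref{BLS}.
\item $R''_\Gamma$ is bipartite, hence an involutive braid solution.
\end{enumerate}

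For the unitary YB-solution, I would use the standard correspondence $R\mapsto \tau R$, where $\tau$ is the flip. It sends braid solutions to YB-solutions, and involutive braid solutions to unitary YB-solutions, i.e.\ $(\tau R)(\tau R)^{21}=1$. The key point is that this correspondence preserves automorphism groups. A bijection $\sigma$ of $X$ commutes with $R$ if and only if $\sigma\times\sigma$ commutes with $\tau R$, because $\tau$ commutes with $\sigma\times\sigma$. So ${\rm Aut}(X,\tau R'')={\rm Aut}(X,R'')\cong G$, and the same argument gives the YB-solution obtained from $R_\Gamma$.

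The mathematical content sits in the cited theorems, so the main obstacle here is bookkeeping. First, the edge-existence hypothesis must be handled for trivial or edgeless cases, as done above. Second, one must confirm that the notion of automorphism is the same on both sides of the flip. Third, one must check that the implications ``FS $\Rightarrow$ braid'' and ``idempotent BLS $\Rightarrow$ YB'' hold as stated earlier in the paper. If a cited theorem requires additional conditions such as connectedness, I would first try passing to the complement graph, or adding an asymmetric rigid gadget attached to every vertex, either of which preserves the automorphism group.
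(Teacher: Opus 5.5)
Your route is the paper's. You realize $G\cong{\rm Aut}(\Gamma)$ by Frucht and de Groot--Sabidussi, and use the complement to get $E\neq\emptyset$. You then apply \thref{autFS}, \thref{autBLS} and \thref{autbij}, and pass to YB-solutions via $R\mapsto\tau_X\circ R$, which leaves automorphism groups unchanged by \equref{auttau}. Two details need fixing; neither affects the overall structure.

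\emph{The justification in (2) is wrong.} The identity you display, $R^{12}R^{13}R^{23}=R^{23}R^{12}$, is the pentagon equation, not the BLS equation. Moreover, ``idempotent $+$ pentagon $\Rightarrow$ YB'' is false in general. For any band $S$, the map $R(x,y)=(x,xy)$ is idempotent and satisfies your identity. However, it is a YB-solution only if $xyz=xyxz$ holds in $S$. This fails in the free right regular band on $a,b,c$, where $abc\neq bac=ab\cdot ac$. The correct BLS condition is that $R^{12}$, $R^{13}$, $R^{23}$ commute pairwise, and this gives $R^{12}R^{13}R^{23}=R^{23}R^{13}R^{12}$ at once. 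Idempotency plays no role in obtaining YB. It is simply the extra property demanded in (2), which \leref{BLSGamma} checks through $r_u^2=r_u$.

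\emph{The trivial-group graph is not asymmetric.} A single edge with a pendant path attached is just a path, and it has a reflection symmetry. What works is an asymmetric graph with at least one edge, such as the $6$-vertex graph of \exref{autbij}. For (1) the paper instead takes the one-element solution.
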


In the terminology of Babai recalled above, the theorem says that the classes of FS-solutions, of idempotent BLS-solutions and of bipartite solutions are universal and finitely universal. Finally, in \sseref{examples} we give explicit solutions of the Yang--Baxter equation whose automorphism groups are the cyclic group $C_n$, the symmetric group $S_n$, the dihedral group $D_n$ of order $2n$ and the quaternion group $Q_8$, as well as a unitary YB-solution, constructed directly from a field $K$, whose automorphism group is ${\rm Aut} (K)$. As a consequence, using a theorem of Fried and Koll\'ar \cite[Theorem 4.2]{Babai1995}, we obtain a second proof, which avoids graphs, of the fact that any group is isomorphic to the automorphism group of a bipartite solution (\coref{bij}). 

\section{Preliminaries}\selabel{prel}

Given a set $X$, we denote by $\tau_X \colon X\times X \to X \times X$ the flip map, $\tau_X (x, \, y) = (y, \, x)$; ${\rm Sym} (X)$ denotes the symmetric group on $X$, i.e. the group of all
bijections $X \to X$. For a map $R\colon X \times X \to X\times X$ we use the usual notation $R^{12} := R \times {\rm Id}_X$, $R^{23} := {\rm Id}_X \times R$ and $R^{13}: = ({\rm Id}_X \times \tau_X) \circ R^{12} \circ ({\rm Id}_X \times \tau_X)$. The map $R$ is a (set-theoretic) solution of the \emph{Yang--Baxter equation}, or a \emph{YB-solution}, if $R^{12} R^{13} R^{23} = R^{23}R^{13}R^{12}$, and a solution of the \emph{braid equation} if $R^{12}R^{23}R^{12} = R^{23}R^{12}R^{23}$. It is well known that $R$ is a solution of the braid equation if and only if $\tau_X \circ R$ is a YB-solution. A map $R\colon X \times X \to X\times X$ is called
\emph{unitary} if $R^{21} R = {\rm Id}_{X \times X}$, where $R^{21} := \tau_X R \tau_X$. Any map $R \colon X \times X \to X \times X$ can be written as
\begin{equation}\eqlabel{notatieR}
R(x, \, y) = (x \cdot y, \, x \ast y),
\end{equation}
where $(X, \, \cdot, \, \ast)$ is a \emph{bi-magma}, i.e. a set with two binary operations; this gives a bijection between maps $R \colon X \times X \to X \times X$ and bi-magma structures on $X$. In these terms, $R$ is a YB-solution if and only if for all $x$, $y$, $z\in X$:
\begin{align}
  (x\cdot y) \cdot z &= \bigl(x \cdot (y \ast z) \bigr) \cdot (y \cdot z)\eqlabel{YB1}\\
  \bigl( x \cdot (y \ast z) \bigr) \ast (y\cdot z ) &= (x \ast y) \cdot \bigl( (x\cdot y) \ast z  \bigr)\eqlabel{YB2} \\
  x \ast (y \ast z) &=  (x \ast y) \ast \bigl( (x\cdot y) \ast z\bigr).\eqlabel{YB3}
\end{align}
A \emph{morphism} of solutions $\sigma \colon (X, \, R) \to (X', \, R')$ is a map $\sigma \colon X \to X'$ such that $(\sigma \times \sigma) \circ R = R' \circ (\sigma \times \sigma)$, i.e. a morphism of the associated bi-magmas: $\sigma (x \cdot y) = \sigma(x) \cdot \sigma(y)$ and $\sigma (x \ast y) = \sigma(x) \ast \sigma(y)$. We denote by $\YB$ the category of all YB-solutions, i.e. its objects are pairs $(X, \, R)$ consisting of a set $X$ and a YB-solution $R: X \times X \to X \times X$. The \emph{automorphism group} ${\rm Aut} (X, \, R)$ of $(X, \, R)$ is the group of all bijections $\sigma \colon X \to X$ with $(\sigma \times \sigma) \circ R = R \circ (\sigma \times \sigma)$, i.e. the automorphism group of the bi-magma $(X, \, \cdot, \, \ast)$. Since $(\sigma \times \sigma) \circ \tau_X = \tau_X \circ (\sigma \times \sigma)$ for any map $\sigma$, we have
\begin{equation}\eqlabel{auttau}
{\rm Aut} (X, \, R) = {\rm Aut} (X, \, \tau_X \circ R).
\end{equation}
Consequently, the classes of groups which are automorphism groups of solutions of the braid equation and of solutions of the Yang--Baxter equation coincide. In particular, a class of solutions of the braid equation, such as the FS-solutions or the bipartite solutions considered below, will be regarded as a class of YB-solutions via $R \mapsto \tau_X \circ R$; by \equref{auttau}, this does not change the automorphism groups.

We recall the few notions of graph theory that we need. A \emph{graph} $\Gamma = (V, \, E)$ consists of a set $V$ of \emph{vertices} and a set $E$ of $2$-element subsets of $V$, called \emph{edges}; thus our graphs are undirected, without loops and without multiple edges. Two vertices $u \neq v$ are \emph{adjacent}, and we write $u \sim v$, if $\{u, \, v\} \in E$; $u$ and $v$ are the \emph{endpoints} of the edge $\{u, \, v\}$, and a vertex $v$ and an edge $e$ are \emph{incident} if $v \in e$. An \emph{automorphism} of $\Gamma$ is a bijection $\pi \colon V \to V$ such that, for all $u$, $v\in V$, $\{u, \, v\} \in E$ if and only if $\{\pi(u), \, \pi(v)\} \in E$; the automorphisms of $\Gamma$ form a group ${\rm Aut} (\Gamma)$. Since they will be used intensively in what follows, we recall the following results: the first one is due to Frucht \cite{Frucht39}, while the second one is due to de Groot \cite{deGroot59} and Sabidussi \cite{Sabidussi60}.

\begin{theorem}\thlabel{Frucht}
Any finite group $G$ is isomorphic to ${\rm Aut} (\Gamma)$, for some finite graph $\Gamma$. Any group $G$ is isomorphic to ${\rm Aut} (\Gamma)$, for some graph $\Gamma$.
\end{theorem}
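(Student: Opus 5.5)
This is Frucht's theorem together with its extension to arbitrary groups by de Groot and Sabidussi. I would prove it via the classical Cayley-graph-with-gadgets construction. Fix a group $G$ and a generating set $S \subseteq G \setminus \{1\}$; one can always take $S = G\setminus\{1\}$, and for finite $G$ the set $S$ is automatically finite. Well-order $S$ as $\{s_\alpha\}_{\alpha<\kappa}$.

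\textbf{Step 1: the coloured Cayley digraph.} Let $D$ be the Cayley digraph with vertex set $G$ and an arc $g \to g s_\alpha$ coloured $\alpha$ for each $g \in G$ and each $\alpha$. Its colour- and direction-preserving automorphisms are exactly the left translations $x \mapsto hx$, $h\in G$. Indeed, left translations clearly preserve arcs and colours. Conversely, if $\varphi$ is such an automorphism, set $h=\varphi(1)$. Then $\varphi(g s_\alpha) = \varphi(g)s_\alpha$ for all $g$ and $\alpha$. Since $S$ generates $G$ and $s_\alpha^{-1}$-steps correspond to reversed arcs, induction on word length gives $\varphi(g) = hg$.

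\textbf{Step 2: replace coloured arcs by rigid gadgets.} Each arc $g \to gs_\alpha$ is replaced by an undirected path $g - a - b - gs_\alpha$. At $a$ we attach a pendant path of length $2\alpha+1$, and at $b$ a pendant path of length $2\alpha+2$; both lengths are finite ordinals in the finite case. In the infinite case we instead attach gadgets encoding $\alpha$ that are pairwise non-isomorphic rooted trees. Two things must be ensured. First, the original vertices $g\in G$ must be distinguishable from all gadget vertices. This can be done by giving each $g$ a degree or local structure that no gadget vertex has, for instance by attaching to every $g$ a large fixed tree, such as a star with more leaves than any gadget vertex has neighbours; in the infinite case one uses a rooted tree type not used elsewhere. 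Second, the direction of each arc must be recoverable, because the unequal pendant lengths at $a$ and $b$ distinguish the tail from the head, and the colour must be recoverable from those lengths.

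\textbf{Step 3: compute ${\rm Aut}(\Gamma)$.} Every left translation extends to an automorphism of $\Gamma$ by acting compatibly on the gadgets, and this gives an injective homomorphism $G \to {\rm Aut}(\Gamma)$. Conversely, any $\pi \in {\rm Aut}(\Gamma)$ preserves the set of original vertices by the distinguishing local structure. It maps each gadget to a gadget with the same colour and the same orientation, because the pendant lengths are distinct invariants. It is therefore determined by its restriction to $G$, and by Step 1 that restriction is a left translation. Hence ${\rm Aut}(\Gamma)\cong G$, and $\Gamma$ is finite when $G$ is finite.

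\textbf{Main obstacle.} The delicate step is the rigidity bookkeeping in Step 3: checking that no automorphism mixes original vertices with gadget vertices, or swaps the two ends of a gadget. In the infinite case this includes choosing infinitely many pairwise distinguishable gadgets. I would first settle this with explicit degree and distance invariants; for instance, a vertex's degree together with the length of the longest path ending at a leaf through it separates all vertex types. In the infinite case I would encode $\alpha$ as a rigid tree with pairwise non-isomorphic shapes indexed by ordinals, which is where de Groot and Sabidussi need some additional care.
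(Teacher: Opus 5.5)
The paper gives no proof of this statement. It quotes it from Frucht (finite groups) and from de Groot and Sabidussi (arbitrary groups) and uses it as a black box, so there is no argument in the paper to compare with.

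\textbf{Finite case.} Your Steps 1--3 are the standard form of Frucht's argument and are correct for finite $G$, except for one suggestion that breaks them. Attaching to each $g\in G$ a star with more leaves than any gadget vertex has neighbours (so at least $4$ leaves) creates automorphisms that permute the leaves of the star at one $g$ and fix every other vertex. Then ${\rm Aut}(\Gamma)$ contains the product over $g\in G$ of the symmetric groups on these leaf sets, all acting trivially on $G$. So your claim in Step 3 that an automorphism ``is determined by its restriction to $G$'' fails. Any marker must be rigid relative to its attachment vertex; the same applies to the ``rooted tree type not used elsewhere'' you propose in the infinite case. In fact no marker is needed. With $S=G\setminus\{1\}$, each $g$ is the tail of $|S|$ arcs and the head of $|S|$ arcs, each contributing a different neighbour, so $\deg g=2|S|\geq 4$ as soon as $|G|\geq 3$. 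The vertices $a,b$ have degree $3$ and the pendant vertices have degree at most $2$. The cases $|G|\leq 2$ can be checked by hand.

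\textbf{Infinite case.} The real gap is the second sentence of the theorem, which is exactly the de Groot--Sabidussi part, and you defer it. For countable $S$ nothing new is needed: use colours $n<\omega$, pendant paths of lengths $2n+1$ and $2n+2$, and note that group vertices have infinite degree.

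For uncountable $S$, the gadgets must be infinite trees, since there are only countably many finite ones. Then your recognition invariants break down. ``The longest path ending at a leaf'' need not exist. Vertices of degree $3$, or of any degree, may occur inside the gadgets, so degree no longer separates $a,b$ from gadget vertices. You need an explicit family of gadgets and an intrinsic recognition argument.

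One way to close the gap:
\begin{enumerate}[(1)]
\item Let $T_\beta$ be the rooted tree whose root is joined to the roots of copies of $T_\gamma$, one for each $\gamma<\beta$. The root of $T_\beta$ has well-founded rank $\beta$, so these trees are pairwise non-isomorphic.
\item By transfinite induction, $T_\beta$ has no non-trivial root-fixing automorphism: such an automorphism must fix each child, because the subtrees at the children are pairwise non-isomorphic.
\item Let $\kappa=|S|$. Hang distinct trees $T_\beta$, $\beta<\kappa$, at the $a$-vertices and at the $b$-vertices of the various colours; this is possible since $2\kappa=\kappa$.
\item Every gadget vertex then has degree $<\kappa$, whereas group vertices have degree $\kappa$. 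So group vertices are intrinsically recognised.
\item The neighbours of group vertices are exactly the $a$- and $b$-vertices. The pendant tree at such a vertex $x$ is the component of $\Gamma - x$ containing the unique neighbour of $x$ that is neither a group vertex nor an $a$/$b$-vertex.
\item The rooted isomorphism type of this pendant tree recovers the colour and the direction of the arc. Rigidity of the trees then gives the determination by the restriction to $G$ that your Step 3 needs.
\end{enumerate}
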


A graph $\Gamma = (V, \, E)$ and its \emph{complement} $\Gamma^c = (V, \, E^c)$, where $E^c$ is the set of all $2$-element subsets of $V$ which are not in $E$, have the same automorphism group. Hence, in \thref{Frucht}, if $|V| \geq 2$ we can assume that $E \neq \emptyset$, replacing $\Gamma$ by $\Gamma^c$ if necessary; if $|V| \leq 1$, then $G$ is trivial.

\begin{remark}\relabel{bimagma}
Since ${\rm Aut} (X, \, R)$ is the automorphism group of the bi-magma $(X, \, \cdot, \, \ast)$ associated to $R$ by \equref{notatieR}, it is not surprising that any group is the automorphism group of a bi-magma; in fact, one binary operation is enough. Indeed, for a graph $\Gamma = (V, \, E)$ let $x \cdot y := y$ if $\{x, \, y\} \in E$ and $x \cdot y := x$ otherwise. A bijection $\sigma$ of $V$ preserves the operation $\cdot$ if and only if, for all $x \neq y$, we have $\{x, \, y\} \in E \Leftrightarrow \{\sigma(x), \, \sigma(y)\} \in E$; thus ${\rm Aut} (V, \, \cdot) = {\rm Aut} (\Gamma)$, and \thref{Frucht} shows that any group is the automorphism group of a magma, which is finite if the group is finite. The point of the main theorem is therefore that the identities \equref{YB1}-\equref{YB3}, as well as the stronger conditions defining the FS-solutions, the idempotent BLS-solutions or the involutive solutions of the braid equation, impose no restriction on the automorphism group. This is in contrast with the class of left non-degenerate FS-solutions, whose automorphism groups are the centralizers of single maps and which is not finitely universal \cite[Theorem 4.2(2)]{acm1}.
\end{remark}

\section{Frucht-type theorems for YB-solutions}\selabel{frucht}
In order to show that the categories of FS-solutions, BLS-solutions and bipartite solutions are universal and finitely universal, it is enough to associate to any graph with at least one edge an FS-solution, a BLS-solution and a bipartite solution, respectively, whose automorphism groups are isomorphic to the automorphism group of the graph; the conclusion then follows from \thref{Frucht}.

\subsection{FS-solutions}\sselabel{FS}

We recall that a map $R \colon X \times X \to X \times X$ is a solution of the \emph{Frobenius-Separability equation}, or an \emph{FS-solution}, if $R^{12}R^{23} = R^{23}R^{13} = R^{13}R^{12}$. Any FS-solution is a solution of the braid equation, since $R^{12}R^{23}R^{12} = R^{23}R^{13}R^{12} = R^{23}R^{12}R^{23}$. By \cite[Proposition 2.4]{acm1}, a map $R$, written as in \equref{notatieR}, is an FS-solution if and only if for all $x$, $y$, $z\in X$:
\begin{align}
x \cdot (y \cdot z) &= (x \cdot y) \cdot z = x \cdot z \eqlabel{fs1}\\
x \ast (y \cdot z) &= y \cdot (x \ast z) = x \ast y \eqlabel{fs2}\\
(x \cdot y) \ast z &= y \ast (x \ast z) = y \ast z. \eqlabel{fs3}
\end{align}

In \cite[Section 4]{acm1} we described a large class of groups which arise as automorphism groups of FS-solutions; in particular, any finite cyclic group belongs to this class.

\begin{example}\exlabel{autP3}
Let $n \geq 2$ and let $f \colon \bZ_n \to \bZ_n$, $f(z) := z + 1$, be the cyclic permutation of $\bZ_n = \{0, \, 1, \ldots, n-1\}$. By \cite[Example 2.3(3)]{acm1}, the map
\begin{equation*}
R_f \colon \bZ_n \times \bZ_n \to \bZ_n \times \bZ_n, \qquad R_f (x, \, y) := \bigl( y, \, f(x) \bigr) = (y, \, x + 1),
\end{equation*}
is a bijective, left-right non-degenerate FS-solution \cite[Theorem 2.19]{acm1}. Moreover, ${\rm Aut} (\bZ_n, \, R_f)$ is the centralizer of $f$ in ${\rm Sym} (\bZ_n)$ \cite[Theorem 2.19(3)]{acm1}.
If $\sigma \circ f = f \circ \sigma$, then $\sigma (z) = \sigma (f^z (0)) = f^z (\sigma (0)) = z + \sigma(0)$, for all $z \in \bZ_n$; hence
\begin{equation*}
{\rm Aut} (\bZ_n, \, R_f) = \{ \sigma_v \, | \, v\in \bZ_n \} \cong C_n, \qquad \text{where} \,\,\, \sigma_v (z) := z + v.
\end{equation*}
Thus any finite cyclic group $C_n$ is the automorphism group of a bijective FS-solution on a set with $n$ elements. The group $C_n$ can also be obtained from \thref{autFS} below, applied to a graph whose automorphism group is $C_n$, but the resulting solution lives on a much larger set.
\end{example}

The FS-solutions we need are of a very simple type: in the
notation of \cite[Theorem 2.10]{acm1}, they correspond to a rectangular band $A \times B$ and $\vartheta = {\rm Id}_B$, with some additional elements.

Let $\Gamma = (V, \, E)$ be a graph with $E \neq \emptyset$. We consider the set
\begin{equation*}
X_\Gamma := (V \times E) \, \sqcup \, \{x_{v, e} \, | \, v \in V, \, e \in E, \, v \in e\},
\end{equation*}
i.e. the disjoint union of $V \times E$ with a set containing one new element $x_{v, e}$ for each incident pair (vertex, edge); thus each edge $e = \{u, \, v\}$ produces the two new elements $x_{u, e}$ and $x_{v, e}$, and $|X_\Gamma| = |V| \, |E| + 2 |E|$ if $\Gamma$ is finite. Let $a \colon X_\Gamma \to V$ and $b \colon X_\Gamma \to E$ be the maps
\begin{equation*}
a(v, \, e) = a(x_{v, e}) := v, \qquad b(v, \, e) = b(x_{v, e}) := e.
\end{equation*}
Both $a$ and $b$ are surjective, since $V \times E \subseteq X_\Gamma$ and $V \neq \emptyset \neq E$. We define a bi-magma structure on $X_\Gamma$ by
\begin{equation}\eqlabel{RGamma}
x \cdot y := \bigl( a(x), \, b(y) \bigr), \quad x \ast y := \bigl( a(y), \, b(x) \bigr), \quad {\rm i.e.} \quad R_\Gamma (x, \, y) = \Bigl( \bigl( a(x), \, b(y) \bigr), \, \bigl( a(y), \, b(x) \bigr) \Bigr),
\end{equation}
for all $x$, $y\in X_\Gamma$.

\begin{lemma}\lelabel{RGammaFS}
Let $\Gamma = (V, \, E)$ be a graph with $E \neq \emptyset$. Then the map $R_\Gamma \colon X_\Gamma  \times X_\Gamma  \to X_\Gamma  \times X_\Gamma $  given by \equref{RGamma} is an FS-solution,
and hence a solution of the braid equation; moreover, $R_\Gamma^3 = R_\Gamma$.
\end{lemma}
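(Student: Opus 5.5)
The plan is to verify the six identities \equref{fs1}--\equref{fs3} directly. By \cite[Proposition 2.4]{acm1} these characterize FS-solutions, and the paper has already observed that every FS-solution satisfies the braid equation. The only facts needed about the operations are the values of $a$ and $b$ on products. Every product $x \cdot y$ or $x \ast y$ lies in $V \times E \subseteq X_\Gamma$, and on this set $a$ and $b$ are the two projections. Hence
\begin{equation*}
a(x \cdot y) = a(x), \quad b(x \cdot y) = b(y), \quad a(x \ast y) = a(y), \quad b(x \ast y) = b(x),
\end{equation*}
for all $x$, $y \in X_\Gamma$.

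With these four rules each identity reduces to comparing pairs $(a(\cdot), \, b(\cdot))$.
\begin{itemize}
\item For \equref{fs1}: $x \cdot (y \cdot z) = (a(x), \, b(z))$ and $(x \cdot y) \cdot z = (a(x), \, b(z))$, and both equal $x \cdot z$.
\item For \equref{fs2}: $x \ast (y \cdot z) = (a(y), \, b(x))$ and $y \cdot (x \ast z) = (a(y), \, b(x))$, and both equal $x \ast y$.
\item For \equref{fs3}: $(x \cdot y) \ast z = (a(z), \, b(y))$ and $y \ast (x \ast z) = (a(z), \, b(y))$, and both equal $y \ast z$.
\end{itemize}
Each check is a single line. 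The only point requiring care is that products land in $V \times E$, where $(a, b)$ is the identity identification. This is also why the new elements $x_{v,e}$ do not interfere: they occur only as inputs, and inputs are read only through $a$ and $b$.

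For $R_\Gamma^3 = R_\Gamma$, I would first compute $R_\Gamma^2$. Write $R_\Gamma(x, y) = (p, q)$ with $p = (a(x), b(y))$ and $q = (a(y), b(x))$. Then $R_\Gamma(p, q) = \bigl((a(p), b(q)), \, (a(q), b(p))\bigr) = \bigl((a(x), b(x)), \, (a(y), b(y))\bigr)$. So $R_\Gamma^2(x, y) = (\bar{x}, \, \bar{y})$, where $\bar{x} := (a(x), b(x)) \in V \times E$. Since $a(\bar{x}) = a(x)$ and $b(\bar{x}) = b(x)$, we get $R_\Gamma(\bar{x}, \bar{y}) = R_\Gamma(x, y)$, which gives $R_\Gamma^3 = R_\Gamma$.

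I expect no real obstacle here. The only subtle step is keeping track of the fact that $x_{v,e} \neq (v, e)$. This is exactly why $R_\Gamma^2 \neq {\rm Id}$ in general and one only gets $R_\Gamma^3 = R_\Gamma$.
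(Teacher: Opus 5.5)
Your proposal is correct and is essentially the paper's own proof: the same four rules $a(x \cdot y) = a(x)$, $b(x \cdot y) = b(y)$, $a(x \ast y) = a(y)$, $b(x \ast y) = b(x)$, the same one-line checks of \equref{fs1}--\equref{fs3}, and the same computation $R_\Gamma^2 (x, \, y) = \bigl( (a(x), \, b(x)), \, (a(y), \, b(y)) \bigr)$ followed by one more application of $R_\Gamma$. You also justify the four rules explicitly: every product lies in $V \times E$, where $a$ and $b$ are the two projections. The paper leaves this step implicit.
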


\begin{proof}
By \equref{RGamma}, for all $x$, $y\in X_\Gamma$ we have
\begin{equation}\eqlabel{abprod}
a(x \cdot y) = a(x), \qquad b(x \cdot y) = b(y), \qquad a(x \ast y) = a(y), \qquad b(x \ast y) = b(x).
\end{equation}
Therefore, for all $x$, $y$, $z\in X_\Gamma$:
\begin{align*}
x \cdot (y \cdot z) &= \bigl( a(x), \, b(z) \bigr) = (x \cdot y) \cdot z = x \cdot z,\\
x \ast (y \cdot z) &= \bigl( a(y), \, b(x) \bigr) = y \cdot (x \ast z) = x \ast y,\\
(x \cdot y) \ast z &= \bigl( a(z), \, b(y) \bigr) = y \ast (x \ast z) = y \ast z,
\end{align*}
i.e. \equref{fs1}-\equref{fs3} hold. Moreover,
\[
R_\Gamma^2 (x, \, y) = R_\Gamma \bigl( (a(x), \, b(y)), \, (a(y), \, b(x)) \bigr) = \bigl( (a(x), \, b(x)), \, (a(y), \, b(y)) \bigr),
\]
and applying $R_\Gamma$ once more we obtain $R_\Gamma (x, \, y)$ again.
\end{proof}

The following lemma shows that any automorphism of the above FS-solution $(X_\Gamma, \, R_\Gamma)$ is induced by a pair of permutations of $V$ and $E$ preserving the incidence.

\begin{lemma}\lelabel{autFSstep}
Let $\sigma \in {\rm Aut} (X_\Gamma, \, R_\Gamma)$ be an automorphism of the FS-solution $(X_\Gamma, \, R_\Gamma)$ associated to a graph $\Gamma = (V, \, E)$. Then:
\begin{enumerate}[(1)]
\item there exist unique bijections $\sigma_V \colon V \to V$ and $\sigma_E \colon E \to E$ such that $a \circ \sigma = \sigma_V \circ a$, $b \circ \sigma = \sigma_E \circ b$ and
\[
\sigma (v, \, e) = (\sigma_V (v), \, \sigma_E (e)),
\]
for all $(v, \, e) \in V \times E$;
\item for all $v\in V$ and $e \in E$: $v \in e$ if and only if $\sigma_V (v) \in \sigma_E (e)$, and in this case $\sigma (x_{v, e}) = x_{\sigma_V (v), \, \sigma_E (e)}$.
\end{enumerate}
\end{lemma}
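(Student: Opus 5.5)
The plan is to recover $a$ and $b$ intrinsically from the bi-magma structure, and then read off the action of $\sigma$ from \equref{abprod}.

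\emph{The fibres of $a$ and $b$ are intrinsic.} Observe that $x \cdot y = x\cdot z$ holds if and only if $b(y) = b(z)$, since $x\cdot y = (a(x), b(y))$ and this element lies in $V\times E$. Similarly, $y \cdot x = z\cdot x$ holds if and only if $a(y) = a(z)$. These relations are preserved by any automorphism $\sigma$. For instance, if $b(y)=b(z)$, then
\[
\sigma(x)\cdot\sigma(y)=\sigma(x\cdot y)=\sigma(x\cdot z)=\sigma(x)\cdot\sigma(z),
\]
so $b(\sigma(y))=b(\sigma(z))$. Applying the same argument to $\sigma^{-1}$ gives the converse. Hence $\sigma$ permutes the fibres of $b$, and likewise those of $a$. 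Since $a$ and $b$ are surjective, we can define $\sigma_V(a(x)) := a(\sigma(x))$ and $\sigma_E(b(x)) := b(\sigma(x))$. These are well defined, and they are bijections with inverses induced by $\sigma^{-1}$. Uniqueness is immediate from the surjectivity of $a$ and $b$.

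\emph{Action on $V\times E$.} Every $(v,e)\in V\times E$ can be written as $x\cdot y$ with $a(x)=v$ and $b(y)=e$; for instance, $(v,e)=(v,e)\cdot(v,e)$. Then
\[
\sigma(v,e) = \sigma(x)\cdot \sigma(y) = \bigl(a(\sigma(x)),\, b(\sigma(y))\bigr) = \bigl(\sigma_V(v),\, \sigma_E(e)\bigr).
\]
This proves (1).

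\emph{Incidence and the action on the new elements.} The key structural fact is that the image of $\cdot$ is exactly $V\times E$, and this set is preserved by $\sigma$ because $\sigma$ preserves products. Since $\sigma$ is a bijection, it also preserves the complement $\{x_{v,e}\}$. Now take $x = x_{v,e}$ with $v\in e$. The element $\sigma(x)$ is some $x_{v',e'}$ with $v'\in e'$, and
\[
v' = a(\sigma(x)) = \sigma_V(v), \qquad e' = b(\sigma(x)) = \sigma_E(e).
\]
So $\sigma_V(v)\in\sigma_E(e)$, and $\sigma(x_{v,e}) = x_{\sigma_V(v),\sigma_E(e)}$. For the converse, apply the same reasoning to $\sigma^{-1}$, whose induced maps are $\sigma_V^{-1}$ and $\sigma_E^{-1}$. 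If $\sigma_V(v)\in\sigma_E(e)$, this gives $v\in e$.

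The main point requiring care is the preservation of $V\times E$. The extra elements $x_{v,e}$ are not products, and it is precisely these elements that encode incidence: they have the same $(a,b)$-values as the pair $(v,e)$ but are distinct from it. The rest is bookkeeping with \equref{abprod}.
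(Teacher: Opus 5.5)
Your proof is correct and follows essentially the same route as the paper. You recover the fibres of $a$ and $b$ from $\cdot$, using a single fixed factor where the paper quantifies over all $u$, which is equivalent here; you compute $\sigma(v,e)$ via $(v,e)=(v,e)\cdot(v,e)$; and you use the $\sigma$-stability of $X_\Gamma\cdot X_\Gamma=V\times E$ together with $\sigma^{-1}$ for the new elements $x_{v,e}$. The equality $\sigma(V\times E)=V\times E$, rather than mere inclusion, needs surjectivity of $\sigma$ or part (1), as the paper notes.
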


\begin{proof}
(1) Consider the equivalence relations on $X_\Gamma$ defined by $x \sim_1 x'$ if and only if $x \cdot u = x' \cdot u$ for all $u\in X_\Gamma$, and $y \sim_2 y'$ if and only if $u \cdot y = u \cdot y'$ for all $u\in X_\Gamma$. Since $x \cdot u = (a(x), \, b(u))$ and $b$ is surjective, $x \sim_1 x'$ if and only if $a(x) = a(x')$; similarly, $y \sim_2 y'$ if and only if $b(y) = b(y')$. If $x \sim_1 x'$, then $\sigma(x) \cdot \sigma(u) = \sigma (x \cdot u) = \sigma (x' \cdot u) = \sigma(x') \cdot \sigma(u)$ for all $u$; since $\sigma$ is surjective, $\sigma(x) \sim_1 \sigma(x')$. The same holds for $\sim_2$ and for $\sigma^{-1}$. Thus $\sigma$ induces bijections between the classes of $\sim_1$, i.e. a bijection $\sigma_V$ of $V$ with $a \circ \sigma = \sigma_V \circ a$ (using that $a$ is surjective), and similarly a bijection $\sigma_E$ of $E$ with $b \circ \sigma = \sigma_E \circ b$. Uniqueness follows from the surjectivity of $a$ and $b$. Finally, for $(v, \, e) \in V \times E$ we have $(v, \, e) = (v, \, e) \cdot (v, \, e)$ by \equref{RGamma}, hence, using \equref{RGamma},
\begin{equation*}
\sigma (v, \, e) = \sigma (v, \, e) \cdot \sigma (v, \, e) = \Bigl( a \bigl( \sigma (v, \, e) \bigr), \, b \bigl( \sigma (v, \, e) \bigr) \Bigr) = \bigl( \sigma_V (v), \, \sigma_E (e) \bigr).
\end{equation*}

(2) By \equref{RGamma}, the set of all products $X_\Gamma \cdot X_\Gamma$ is exactly $V \times E$, and any automorphism maps products to products; hence $\sigma (V \times E) = V \times E$ (by (1)), and therefore $\sigma$ maps the set of new elements $\{x_{v, e} \, | \, v \in e\}$ bijectively onto itself. If $v \in e$, then $\sigma (x_{v, e})$ is a new element with $a (\sigma (x_{v, e})) = \sigma_V (v)$ and $b (\sigma (x_{v, e})) = \sigma_E (e)$, by (1). Since the only new element $x_{u, f}$ with $a(x_{u, f}) = u$ and $b(x_{u, f}) = f$ is $x_{u, f}$ itself, and it exists only if $u \in f$, we obtain $\sigma_V (v) \in \sigma_E (e)$ and $\sigma (x_{v, e}) = x_{\sigma_V (v), \, \sigma_E (e)}$. Applying the same argument to $\sigma^{-1}$, whose associated bijections are $\sigma_V^{-1}$ and $\sigma_E^{-1}$, we obtain the converse implication.
\end{proof}

\begin{lemma}\lelabel{incidence}
Let $\Gamma = (V, \, E)$ be a graph and let $\sigma_V \colon V \to V$, $\sigma_E \colon E \to E$ be bijections such that $v \in e$ if and only if $\sigma_V (v) \in \sigma_E (e)$, for all $v\in V$ and $e\in E$. Then $\sigma_V \in {\rm Aut} (\Gamma)$ and $\sigma_E (\{u, \, v\}) = \{\sigma_V (u), \, \sigma_V (v)\}$ for all $\{u, \, v\} \in E$. Conversely, for any $\pi \in {\rm Aut} (\Gamma)$, the pair $(\pi, \, \pi_E)$, where $\pi_E (\{u, \, v\}) := \{\pi(u), \, \pi(v)\}$, satisfies this condition.
\end{lemma}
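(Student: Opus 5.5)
The plan is to read off the action of $\sigma_E$ on each edge from the incidence condition, and then get the automorphism property of $\sigma_V$ from that.

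\emph{Action on edges.} Fix an edge $e = \{u, \, v\} \in E$. The vertices incident to $e$ are exactly $u$ and $v$. The vertices incident to $\sigma_E(e)$ are exactly the two endpoints of $\sigma_E(e)$, since $\sigma_E(e)$ is itself an edge. By hypothesis, $w \in e$ if and only if $\sigma_V(w) \in \sigma_E(e)$. So $\sigma_V$ maps the set $\{w \in V \mid w \in e\} = e$ into $\sigma_E(e)$. Because $\sigma_V$ is surjective, every $w' \in \sigma_E(e)$ has the form $\sigma_V(w)$, and then $w \in e$. Hence $\sigma_V(e) = \sigma_E(e)$, that is, $\sigma_E(\{u, \, v\}) = \{\sigma_V(u), \, \sigma_V(v)\}$.

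\emph{$\sigma_V$ is an automorphism.} The previous step gives one direction at once: if $\{u, \, v\} \in E$, then $\{\sigma_V(u), \, \sigma_V(v)\} = \sigma_E(\{u, \, v\}) \in E$. For the converse, suppose $\{\sigma_V(u), \, \sigma_V(v)\} \in E$ with $u \neq v$. Since $\sigma_E$ is surjective, this edge equals $\sigma_E(f)$ for some $f \in E$. The incidence condition then gives $u \in f$ and $v \in f$. As $f$ has exactly two elements and $u \neq v$, we get $f = \{u, \, v\}$, so $\{u, \, v\} \in E$.

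An alternative for the converse is to apply the first step to the pair $(\sigma_V^{-1}, \, \sigma_E^{-1})$. This pair also satisfies the hypothesis: substitute $v = \sigma_V^{-1}(v')$ and $e = \sigma_E^{-1}(e')$ in the condition. Either route shows that $\sigma_V$ is a bijection preserving adjacency in both directions, so $\sigma_V \in {\rm Aut}(\Gamma)$.

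\emph{Converse statement.} Let $\pi \in {\rm Aut}(\Gamma)$. Then $\pi_E(\{u, \, v\}) = \{\pi(u), \, \pi(v)\}$ lies in $E$, so $\pi_E$ is a well-defined map $E \to E$. It is a bijection, with inverse $(\pi^{-1})_E$, because $\pi^{-1}$ is also an automorphism. For the incidence condition, $v \in e$ if and only if $\pi(v) \in \pi(e) = \pi_E(e)$, since $\pi$ is injective.

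There is no real obstacle here. The only point needing care is to use surjectivity (or the inverse pair) to get the equality $\sigma_E(e) = \sigma_V(e)$ and the reverse implication for adjacency, rather than just an inclusion.
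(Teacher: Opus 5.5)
Your proposal is correct and follows essentially the same route as the paper. You identify $\sigma_E(e)$ with $\sigma_V(e)$ from the incidence condition, which gives the forward adjacency implication. The paper gets the reverse implication by applying the same step to the pair $(\sigma_V^{-1}, \, \sigma_E^{-1})$, which is the alternative you mention. Your primary argument via surjectivity of $\sigma_E$ is an equally valid, equally short variant, and your treatment of the converse statement matches the paper's.
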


\begin{proof}
Let $e = \{u, \, v\} \in E$. The vertices incident with $\sigma_E (e)$ are exactly the images through $\sigma_V$ of the vertices incident with $e$; since an edge is the set of its two endpoints, $\sigma_E (e) = \{\sigma_V (u), \, \sigma_V (v)\}$. In particular, $\{\sigma_V (u), \, \sigma_V (v)\} \in E$ whenever $\{u, \, v\} \in E$. Applying this to the pair $(\sigma_V^{-1}, \, \sigma_E^{-1})$, which satisfies the same condition, we obtain the converse implication; hence $\sigma_V \in {\rm Aut} (\Gamma)$. The last statement is obvious: if $\pi \in {\rm Aut} (\Gamma)$, then $\pi_E$ is a bijection of $E$ (with inverse $(\pi^{-1})_E$), and $v \in e$ if and only if $\pi(v) \in \pi_E (e)$.
\end{proof}

With these three results at hand, we can now show that the category of all FS-solutions is universal and finitely universal.

\begin{theorem}\thlabel{autFS}
Let $\Gamma = (V, \, E)$ be a graph with $E \neq \emptyset$. Then the map
\begin{equation*}
{\rm Aut} (\Gamma) \to {\rm Aut} (X_\Gamma, \, R_\Gamma), \qquad \pi \mapsto \widehat{\pi}, \qquad \widehat{\pi} (v, \, e) := \bigl( \pi(v), \, \pi_E (e) \bigr), \quad \widehat{\pi} (x_{v, e}) := x_{\pi(v), \pi_E (e)},
\end{equation*}
is an isomorphism of groups. In particular, any group is isomorphic to the automorphism group of an FS-solution, and hence of a solution of the braid equation.\footnote{By \equref{auttau}, the YB-solution
$\tau_{X_\Gamma} \circ R_\Gamma$ has the same automorphism group ${\rm Aut} (\Gamma)$.} If the group is finite, the solution can be chosen on a finite set.
\end{theorem}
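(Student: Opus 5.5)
The plan is to verify that $\pi \mapsto \widehat{\pi}$ is a well-defined group homomorphism into ${\rm Aut} (X_\Gamma, \, R_\Gamma)$, and then to get injectivity and surjectivity from \leref{autFSstep} and \leref{incidence}.

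\emph{Well-definedness.} Let $\pi \in {\rm Aut} (\Gamma)$. By the converse part of \leref{incidence}, $\pi_E$ is a bijection of $E$ and $v \in e$ if and only if $\pi(v) \in \pi_E(e)$. Hence $x_{\pi(v), \pi_E(e)}$ exists whenever $x_{v,e}$ does, so $\widehat{\pi}$ is a well-defined map $X_\Gamma \to X_\Gamma$. It is a bijection, with inverse $\widehat{\pi^{-1}}$, because $(\pi^{-1})_E = (\pi_E)^{-1}$. By construction,
\[
a \circ \widehat{\pi} = \pi \circ a, \qquad b \circ \widehat{\pi} = \pi_E \circ b,
\]
which can be checked separately on $V \times E$ and on the new elements. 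From \equref{RGamma},
\[
\widehat{\pi}(x \cdot y) = \bigl( \pi(a(x)), \, \pi_E(b(y)) \bigr) = \bigl( a(\widehat{\pi}(x)), \, b(\widehat{\pi}(y)) \bigr) = \widehat{\pi}(x) \cdot \widehat{\pi}(y).
\]
The same computation works for $\ast$, so $\widehat{\pi} \in {\rm Aut} (X_\Gamma, \, R_\Gamma)$. The identity $(\pi\rho)_E = \pi_E \rho_E$ gives $\widehat{\pi\rho} = \widehat{\pi}\,\widehat{\rho}$, so the map is a homomorphism.

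\emph{Injectivity.} If $\widehat{\pi} = {\rm Id}$, then $\pi(v) = a(\widehat{\pi}(v,e)) = v$ for every $v$, using any fixed edge $e$; this is where $E \neq \emptyset$ is needed. Hence $\pi = {\rm Id}_V$.

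\emph{Surjectivity.} Let $\sigma \in {\rm Aut}(X_\Gamma, \, R_\Gamma)$. \leref{autFSstep}(1) gives bijections $\sigma_V$ and $\sigma_E$, and \leref{autFSstep}(2) says that the pair $(\sigma_V, \, \sigma_E)$ preserves incidence in both directions. By \leref{incidence}, $\pi := \sigma_V \in {\rm Aut}(\Gamma)$ and $\sigma_E = \pi_E$. Then \leref{autFSstep} gives $\sigma(v,e) = (\pi(v), \, \pi_E(e))$ and $\sigma(x_{v,e}) = x_{\pi(v), \pi_E(e)}$, so $\sigma = \widehat{\pi}$. The real work, namely recovering $\sigma_V$ and $\sigma_E$ and showing that the new elements force incidence to be preserved, is already contained in \leref{autFSstep}. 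No step here should be a serious obstacle; the only point needing care is that $\widehat{\pi}$ is well defined on the new elements.

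\emph{Consequences.} Given a group $G$, \thref{Frucht} provides a graph $\Gamma$ with ${\rm Aut}(\Gamma) \cong G$, which is finite if $G$ is finite. If $|V| \geq 2$, replace $\Gamma$ by its complement if necessary so that $E \neq \emptyset$, as remarked after \thref{Frucht}. If $|V| \leq 1$, then $G$ is trivial, and any graph with trivial automorphism group and at least one edge works: for instance the asymmetric graph on $6$ vertices, which is a path $1$–$2$–$3$–$4$–$5$ with an extra vertex $6$ joined to $3$ and $4$. Alternatively, \exref{autP3} does not give the trivial group, so the explicit asymmetric graph is the option to use. Then $(X_\Gamma, \, R_\Gamma)$ is an FS-solution by \leref{RGammaFS}, hence a braid solution, with automorphism group isomorphic to $G$. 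It is finite when $\Gamma$ is, since $|X_\Gamma| = |V||E| + 2|E|$. The footnote's YB-solution $\tau_{X_\Gamma} \circ R_\Gamma$ has the same automorphism group by \equref{auttau}.
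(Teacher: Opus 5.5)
Your proof is correct and follows the paper's argument essentially step for step: well-definedness and the relations $a\circ\widehat{\pi}=\pi\circ a$, $b\circ\widehat{\pi}=\pi_E\circ b$ come from the converse part of the incidence lemma, injectivity uses $E\neq\emptyset$, and surjectivity combines the lemma producing $\sigma_V$, $\sigma_E$ with the incidence lemma. The only difference is the trivial group, where the paper simply takes the one-element solution and you instead use an asymmetric graph with edges (your triangle $3$--$4$--$6$ with tails $3$--$2$--$1$ and $4$--$5$ is indeed asymmetric), which works equally well.
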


\begin{proof}
Let $\pi \in {\rm Aut} (\Gamma)$. By \leref{incidence}, $\widehat{\pi}$ is a well defined bijection of $X_\Gamma$ (with inverse $\widehat{\pi^{-1}}$), and $a \circ \widehat{\pi} = \pi \circ a$, $b \circ \widehat{\pi} = \pi_E \circ b$. Hence, by \equref{RGamma},
\begin{equation*}
\widehat{\pi} (x \cdot y) = \bigl( \pi (a(x)), \, \pi_E (b(y)) \bigr) = \bigl( a(\widehat{\pi}(x)), \, b(\widehat{\pi}(y)) \bigr) = \widehat{\pi}(x) \cdot \widehat{\pi}(y),
\end{equation*}
and similarly $\widehat{\pi} (x \ast y) = \widehat{\pi}(x) \ast \widehat{\pi}(y)$. Thus $\widehat{\pi} \in {\rm Aut} (X_\Gamma, \, R_\Gamma)$, and $\pi \mapsto \widehat{\pi}$ is clearly a morphism of groups; it is injective, since $\widehat{\pi} (v, \, e) = (\pi(v), \, \pi_E (e))$ for all $(v, \, e) \in V \times E$ and $E \neq \emptyset$, so that $\widehat{\pi}$ determines $\pi$.

Conversely, let $\sigma \in {\rm Aut} (X_\Gamma, \, R_\Gamma)$ and let $\sigma_V$, $\sigma_E$ be the bijections of \leref{autFSstep}. By \leref{autFSstep}(2) and \leref{incidence}, $\pi := \sigma_V \in {\rm Aut} (\Gamma)$ and $\sigma_E = \pi_E$, and then \leref{autFSstep} shows that $\sigma = \widehat{\pi}$. This proves the first statement.

Let now $G$ be a group. If $G$ is trivial, we can take the one-element solution. Otherwise, by \thref{Frucht} and the remark following it, there exists a graph $\Gamma = (V, \, E)$ with $E \neq \emptyset$ and ${\rm Aut} (\Gamma) \cong G$, which is finite if $G$ is finite; then ${\rm Aut} (X_\Gamma, \, R_\Gamma) \cong G$, $R_\Gamma$ is an FS-solution by \leref{RGammaFS}, and $X_\Gamma$ is finite if $\Gamma$ is finite.
\end{proof}

\begin{remarks}\relabel{autFS}
\begin{enumerate}[(1)]
\item The solutions $R_\Gamma$ are neither left nor right non-degenerate, in accordance with \cite[Theorem 4.2(2)]{acm1}: by \cite[Theorem 2.19]{acm1}, the automorphism groups of left non-degenerate FS-solutions are centralizers of single maps, and not all groups are of this form. Thus a realization theorem for all groups cannot be confined to non-degenerate FS-solutions.

\item The classes of groups realized by other restricted classes of FS-solutions can be much smaller. For instance, any bijective FS-solution is isomorphic to a solution on $A \times B$ of the form
\[
R_\omega \bigl( (a_1, \, b_1), \, (a_2, \, b_2) \bigr) = \bigl( (a_1, \, b_2), \, (a_2, \, \omega(b_1)) \bigr),
\]
with $\omega$ a permutation of $B$, and its automorphism group is ${\rm Sym} (A) \times C(\omega)$, where $C(\omega)$ is the centralizer of $\omega$ in ${\rm Sym} (B)$ \cite[Theorem 2.20]{acm1}.
\end{enumerate}
\end{remarks}

\thref{autFS} provides the theoretical answer we were looking for, but it is far from optimal with respect to the size of the underlying set. We show below that the dihedral group $D_4$ can be realized as the automorphism group of an FS-solution on a set with $4$ elements, while the smallest set on which such a solution can be obtained by applying the construction of \thref{autFS} has $12$ elements.

\begin{example}\exlabel{D4FS}
Let $X := \{1, \, 2, \, 3, \, 4\}$ and $f := (1 \,\, 2)(3 \,\, 4) \in {\rm Sym} (X)$. By \cite[Example 2.3(3)]{acm1}, the map $R_f \colon X \times X \to X \times X$, $R_f (x, \, y) := \bigl( y, \, f(x) \bigr)$, is a bijective FS-solution. By \cite[Theorem 2.19(3)]{acm1}, ${\rm Aut} (X, \, R_f)$ is the centralizer of $f$ in ${\rm Sym} (X)$. A permutation $\sigma$ commutes with $f$ if and only if it maps each of the two cycles $\{1, \, 2\}$ and $\{3, \, 4\}$ of $f$ onto a cycle of $f$; hence
\begin{equation*}
{\rm Aut} (X, \, R_f) = \bigl\{ {\rm Id}, \, (1 \,\, 2), \, (3 \,\, 4), \, (1 \,\, 2)(3 \,\, 4), \, (1 \,\, 3)(2 \,\, 4), \, (1 \,\, 4)(2 \,\, 3), \, (1 \,\, 3 \,\, 2 \,\, 4), \, (1 \,\, 4 \,\, 2 \,\, 3) \bigr\}.
\end{equation*}
This group is generated by $\rho := (1 \,\, 3 \,\, 2 \,\, 4)$, of order $4$, and $s := (1 \,\, 2)$, of order $2$, with $s \rho s = \rho^{-1}$; thus ${\rm Aut} (X, \, R_f)$ is isomorphic to the dihedral group $D_4$ of order $8$.
\end{example}

On the other hand, \thref{autFS} gives:

\begin{example}\exlabel{D4graph}
Let $\Gamma = (V, \, E)$ be the graph with four vertices $V = \{1, \, 2, \, 3, \, 4\}$ and two disjoint edges $e = \{1, \, 2\}$ and $f = \{3, \, 4\}$. An automorphism of $\Gamma$ either fixes both edges or interchanges them, and it may interchange the two endpoints of each edge; hence
\begin{equation*}
{\rm Aut} (\Gamma) = \bigl\{ {\rm Id}, \, (1 \,\, 2), \, (3 \,\, 4), \, (1 \,\, 2)(3 \,\, 4), \, (1 \,\, 3)(2 \,\, 4), \, (1 \,\, 4)(2 \,\, 3), \, (1 \,\, 3 \,\, 2 \,\, 4), \, (1 \,\, 4 \,\, 2 \,\, 3) \bigr\} \cong D_4,
\end{equation*}
the dihedral group of order $8$. The set $X_\Gamma$ has $4 \cdot 2 + 4 = 12$ elements: the eight pairs $(v, \, e)$ and $(v, \, f)$, with $v\in V$, and the four elements $x_{1, e}$, $x_{2, e}$, $x_{3, f}$, $x_{4, f}$. By \equref{RGamma}, the FS-solution $R_\Gamma$ is given by $R_\Gamma (x, \, y) = \bigl( (a(x), \, b(y)), \, (a(y), \, b(x)) \bigr)$; for instance,
\begin{equation*}
R_\Gamma \bigl( x_{1, e}, \, (3, \, f) \bigr) = \bigl( (1, \, f), \, (3, \, e) \bigr), \qquad R_\Gamma \bigl( x_{2, e}, \, x_{4, f} \bigr) = \bigl( (2, \, f), \, (4, \, e) \bigr).
\end{equation*}
By \thref{autFS}, ${\rm Aut} (X_\Gamma, \, R_\Gamma) \cong  {\rm Aut} (\Gamma) \cong D_4$. The set $X_\Gamma$ above is the smallest one which can be obtained from \thref{autFS} for the group $D_4$. Indeed, $|X_\Gamma| = |E| \, (|V| + 2)$, and if ${\rm Aut} (\Gamma) \cong D_4$, then $|V| \geq 4$, since ${\rm Aut} (\Gamma)$ is a subgroup of ${\rm Sym} (V)$, and $|E| \geq 2$, since a graph with exactly one edge $e$ has automorphism group $C_2 \times {\rm Sym} (V \setminus e)$, whose order $2 \, (|V| - 2)!$ is never equal to $8$.
\end{example}


\subsection{BLS-solutions}\sselabel{BLS}

In this subsection we construct, for any graph, a BLS-solution with the same automorphism group; hence the category of all BLS-solutions is also universal and finitely universal. First, we recall that
a map $R \colon X\times X \to X\times X$ is a solution of the \emph{Baaj--Long--Skandalis equation}, or a \emph{BLS-solution} \cite{ACM2, CMv1}, if
\[
R^{12}R^{23} = R^{23}R^{12}, \quad R^{12}R^{13} = R^{13}R^{12} \quad {\rm and} \quad R^{13}R^{23} = R^{23}R^{13}.
\]
Any BLS-solution is a YB-solution \cite[Remarks 2.2(3)]{ACM2}, and the category of BLS-solutions is isomorphic to the variety of P{\l}onka bi-magmas \cite[Theorem 2.4]{ACM2}. The structure of P{\l}onka bi-magmas is described in \cite[Theorem 3.25]{ACMv3}: a P{\l}onka bi-magma structure on a set $X$ is given by a partition $X = \coprod_{i\in I} X_i$ and two families of maps $f_i^j$, $g_i^j \colon X_i \to X_i$ ($i$, $j\in I$) which commute pairwise for each fixed $i$, via $x_i \cdot x_j := f_i^j (x_i)$ and $x_i \ast x_j := g_j^i (x_j)$, for $x_i\in X_i$, $x_j\in X_j$.

Let $\Gamma = (V, \, E)$ be a graph. For $v\in V$ we denote by $N[v] := \{v\} \cup \{u \in V \, | \, u \sim v\}$ the \emph{closed neighbourhood} of $v$, i.e. the set consisting of $v$ and of all the vertices adjacent to $v$. We define the set

\begin{equation*}
Y_\Gamma := \{p_v \, | \, v \in V\} \, \sqcup \, \{q_v \, | \, v \in V\},
\end{equation*}
containing two elements $p_v \neq q_v$ for each vertex $v$, so that $|Y_\Gamma| = 2 |V|$, and we denote by $\nu \colon Y_\Gamma \to V$ the map with $\nu (p_v) = \nu (q_v) = v$. We define two
multiplications on $Y_\Gamma$ by
\begin{equation}\eqlabel{BLSGamma}
x \cdot y := \begin{cases} p_{\nu(x)}, & \text{if} \,\,\, \nu(y) \in N[\nu(x)],\\ x, & \text{otherwise}, \end{cases} \qquad \qquad x \ast y := y,
\end{equation}
for all $x$, $y\in Y_\Gamma$. We denote by
\begin{equation}\eqlabel{blsgraf}
R'_\Gamma \colon Y_\Gamma \times Y_\Gamma \to Y_\Gamma \times Y_\Gamma, \qquad R'_\Gamma (x, \, y) = (x \cdot y, \, y)
\end{equation}
for all $x$, $y\in Y_\Gamma$, the associated map of the multiplications \equref{BLSGamma}.

\begin{lemma}\lelabel{BLSGamma}
Let $\Gamma = (V, \, E)$ be a graph. Then the map $R'_\Gamma \colon Y_\Gamma \times Y_\Gamma \to Y_\Gamma \times Y_\Gamma$ given by \equref{blsgraf} is an idempotent BLS-solution on the set $Y_\Gamma$; in particular, it is a YB-solution.
\end{lemma}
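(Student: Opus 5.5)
Two routes are available. One invokes the structure theorem for P{\l}onka bi-magmas \cite[Theorem 3.25]{ACMv3}. The other checks the three commutation relations directly, and we take that route because it avoids matching notation. With $R(x,y)=(x\cdot y,\,y)$, so that $x\ast y=y$, we compute on a triple $(x,y,z)$:
\[
R^{12}(x,y,z)=(x\cdot y,\,y,\,z),\qquad R^{23}(x,y,z)=(x,\,y\cdot z,\,z),\qquad R^{13}(x,y,z)=(x\cdot z,\,y,\,z).
\]
Each map alters only one coordinate, multiplying it on the right by a coordinate that never changes under the other maps involved, except in the following situations.

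\emph{$R^{12}R^{23}=R^{23}R^{12}$.} Both sides equal $\bigl(x\cdot y',\,y\cdot z,\,z\bigr)$ versus $\bigl(x\cdot y,\,y\cdot z,\,z\bigr)$, where $y'=y\cdot z$. The key observation is that $\nu(y\cdot z)=\nu(y)$, since $y\cdot z\in\{p_{\nu(y)},\,y\}$. Moreover, $x\cdot y$ depends on $y$ only through $\nu(y)$, so the two expressions agree.

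\emph{$R^{12}R^{13}=R^{13}R^{12}$.} We need $(x\cdot z)\cdot y=(x\cdot y)\cdot z$. Since $\nu(x\cdot w)=\nu(x)$ and the product depends only on $\nu$ of the right factor, a case check in terms of $v=\nu(x)$ gives the following. If either $\nu(y)$ or $\nu(z)$ lies in $N[v]$, then both sides equal $p_v$; this uses $p_v\cdot w\in\{p_v\}$, i.e.\ $p_v\cdot w=p_v$ in both cases. Otherwise both sides equal $x$.

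\emph{$R^{13}R^{23}=R^{23}R^{13}$.} Here $R^{13}$ modifies the first coordinate using $z$, while $R^{23}$ modifies the second using $z$. The maps act on disjoint coordinates and both read only $z$, which is unchanged, so they commute trivially.

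Idempotency means $R^2=R$, i.e.\ $(x\cdot y)\cdot y=x\cdot y$. This is the special case $z=y$ of the preceding analysis: if $\nu(y)\in N[\nu(x)]$, then $p_v\cdot y=p_v$; otherwise $x\cdot y=x$.

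Finally, every BLS-solution is a YB-solution by \cite[Remarks 2.2(3)]{ACM2}, which gives the last assertion. There is no real obstacle here. The only point needing care is the second relation, and it rests on two facts: the product depends only on $\nu$ of its arguments, and $p_v$ is absorbing under right multiplication.
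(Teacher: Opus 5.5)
Your proof is correct, but it takes a different route from the paper. The paper does not check the three commutation relations by hand. Instead it invokes the structure theorem for BLS-solutions \cite[Theorem 2.6]{ACM2}, with the following data:
\begin{itemize}
\item the partition $Y_\Gamma=\coprod_{v\in V}\{p_v,\,q_v\}$;
\item $f_v^u$ equal to the constant map with value $p_v$ when $u\in N[v]$, and to the identity otherwise;
\item all $g_v^u={\rm Id}$.
\end{itemize}
All that then remains is that these maps commute. This is immediate, since constant maps with the same value $p_v$ commute with each other and with the identity.

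For idempotency the paper argues as you do. Right multiplication by any $y$ with $\nu(y)=u$ gives a map $r_u$ which, on each block $\{p_v,\,q_v\}$, is either the identity or constant. Hence $r_u$ is idempotent.

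Your direct computation is self-contained and isolates the three facts that drive the result:
\begin{itemize}
\item $\nu(x\cdot w)=\nu(x)$;
\item $x\cdot w$ depends on $w$ only through $\nu(w)$;
\item each $p_v$ is a left zero.
\end{itemize}
In effect you reprove the special case of the structure theorem that is needed. The first two facts say exactly that the product has the form $x\cdot w=f_{\nu(x)}^{\nu(w)}(x)$ with block-preserving maps $f$.

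The paper's route is shorter and places $R'_\Gamma$ inside the general classification of BLS-solutions as P{\l}onka bi-magmas, but it relies on an external theorem. Yours needs nothing beyond the definitions.

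One small wording slip: in the first relation, the two sides are $(x\cdot(y\cdot z),\,y\cdot z,\,z)$ and $(x\cdot y,\,y\cdot z,\,z)$, which you then show are equal. Writing ``both sides equal \ldots{} versus \ldots'' should be phrased as a comparison.
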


\begin{proof}
$R'_\Gamma$ is a BLS-solution by the structure theorem of BLS-solutions \cite[Theorem 2.6]{ACM2}, applied to the partition $Y_\Gamma = \coprod_{v\in V} \{p_v, \, q_v\}$, with $f_v^u$ the constant map with value $p_v$ if $u \in N[v]$, $f_v^u := {\rm Id}$ otherwise, and $g_v^u := {\rm Id}$ (all these maps commute). For $u\in V$, let $r_u \colon Y_\Gamma \to Y_\Gamma$ be the map given by $r_u (x) := x \cdot y$, for any $y$ with $\nu(y) = u$. By \equref{BLSGamma}, $r_u$ does not depend on the choice of $y$, it maps each set $\{p_v, \, q_v\}$ into itself, and its restriction to $\{p_v, \, q_v\}$ is either the identity or the constant map with value $p_v$; in particular, $r_u$ is idempotent. Hence, for any $x$, $y\in Y_\Gamma$ we have
\begin{equation*}
(R'_\Gamma)^2 (x, \, y) = \bigl( (x \cdot y) \cdot y, \, y \bigr) = \bigl( r_{\nu(y)}^2 (x), \, y \bigr) = \bigl( r_{\nu(y)} (x), \, y \bigr) = (x \cdot y, \, y) = R'_\Gamma (x, \, y). \qedhere
\end{equation*}
\end{proof}

With these preparations, we can now prove the following:

\begin{theorem}\thlabel{autBLS}
For any graph $\Gamma = (V, \, E)$, the map
\begin{equation*}
{\rm Aut} (\Gamma) \to {\rm Aut} (Y_\Gamma, \, R'_\Gamma), \qquad \pi \mapsto \widetilde{\pi}, \qquad \widetilde{\pi} (p_v) := p_{\pi(v)}, \quad \widetilde{\pi} (q_v) := q_{\pi(v)},
\end{equation*}
is an isomorphism of groups. In particular, any group is isomorphic to the automorphism group of an idempotent BLS-solution, and hence of a YB-solution; if the group is finite, the solution can be chosen on a finite set: if $G \cong {\rm Aut} (\Gamma)$ for a finite graph $\Gamma$ with $n$ vertices, then $|Y_\Gamma| = 2n$.
\end{theorem}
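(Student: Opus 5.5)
We have to show that $\pi \mapsto \widetilde{\pi}$ is a well-defined group morphism landing in ${\rm Aut}(Y_\Gamma, \, R'_\Gamma)$, that it is injective, and that it is surjective. The universality statements then follow exactly as in the last paragraph of the proof of \thref{autFS}, using \thref{Frucht} and \leref{BLSGamma}. Here no assumption $E \neq \emptyset$ is needed, since $Y_\Gamma$ is defined for every graph, and $|Y_\Gamma| = 2|V|$ by construction.

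\textbf{Well-definedness and injectivity.} Let $\pi \in {\rm Aut}(\Gamma)$. Then $\widetilde{\pi}$ is a bijection with inverse $\widetilde{\pi^{-1}}$, and it satisfies $\nu \circ \widetilde{\pi} = \pi \circ \nu$. Since $\pi$ is an automorphism, $\pi(N[v]) = N[\pi(v)]$, so $\nu(y) \in N[\nu(x)]$ if and only if $\nu(\widetilde{\pi}(y)) \in N[\nu(\widetilde{\pi}(x))]$. Comparing the two cases in \equref{BLSGamma} gives $\widetilde{\pi}(x \cdot y) = \widetilde{\pi}(x) \cdot \widetilde{\pi}(y)$. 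The identity $\widetilde{\pi}(x \ast y) = \widetilde{\pi}(x) \ast \widetilde{\pi}(y)$ is trivial because $x \ast y = y$. The map $\pi \mapsto \widetilde{\pi}$ is clearly multiplicative, and it is injective because $\widetilde{\pi}(p_v) = p_{\pi(v)}$ determines $\pi$.

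\textbf{Surjectivity.} This is the main step. Let $\sigma \in {\rm Aut}(Y_\Gamma, \, R'_\Gamma)$. I would recover the labels $p$, $q$ and the partition intrinsically from the operation $\cdot$.

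First, $x \cdot x = p_{\nu(x)}$ always holds, since $\nu(x) \in N[\nu(x)]$. Hence $p_v \cdot p_v = p_v$ and $q_v \cdot q_v = p_v \neq q_v$. So the idempotents of $(Y_\Gamma, \, \cdot)$ are exactly the elements $p_v$, and $\sigma$ permutes them. This defines a bijection $\pi$ of $V$ by $\sigma(p_v) = p_{\pi(v)}$. Next, $q_v$ is the unique non-idempotent element with $q_v \cdot q_v = p_v$. Since $\sigma(q_v)$ is non-idempotent and squares to $\sigma(p_v) = p_{\pi(v)}$, we get $\sigma(q_v) = q_{\pi(v)}$, that is, $\sigma = \widetilde{\pi}$.

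It remains to show that $\pi \in {\rm Aut}(\Gamma)$. For $u \neq v$ we have $q_v \cdot p_u = p_v$ if $u \sim v$, and $q_v \cdot p_u = q_v$ otherwise. Applying $\sigma$, we get $u \sim v$ if and only if $q_{\pi(v)} \cdot p_{\pi(u)} = p_{\pi(v)}$, which holds if and only if $\pi(u) \sim \pi(v)$. This is exactly the statement that $\pi$ preserves adjacency and non-adjacency, so $\pi \in {\rm Aut}(\Gamma)$. The only delicate point is the role of the $q_v$: using just the $p_v$ would not detect adjacency, since $p_v \cdot y = p_v$ always. The $q_v$ are precisely what make the neighbourhood structure visible.
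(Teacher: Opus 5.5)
Your proof is correct and follows essentially the same route as the paper. The paper characterizes the $p_v$ as the left zeros of $(Y_\Gamma,\cdot)$ where you use idempotents, and it defines $\pi$ by $\sigma(q_v)=q_{\pi(v)}$ and then recovers $\sigma(p_v)$ from $q_v\cdot q_v=p_v$, the reverse of your order; the adjacency test through $q_v\cdot p_u$ is identical to yours, so these differences are only cosmetic.
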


\begin{proof}
Let $\pi \in {\rm Aut} (\Gamma)$ be an automorphism of the graph $\Gamma = (V, \, E)$. Since $\pi$ preserves adjacency, $u \in N[v]$ if and only if $\pi(u) \in N[\pi(v)]$. Moreover, $\nu \circ \widetilde{\pi} = \pi \circ \nu$, and $\widetilde{\pi}$ maps the elements $p_v$ (respectively $q_v$) to elements of the same kind. Using \equref{BLSGamma}, for $x$, $y\in Y_\Gamma$ we obtain: if $\nu(y) \in N[\nu(x)]$, then $\nu (\widetilde{\pi} (y)) \in N[\nu (\widetilde{\pi}(x))]$ and $\widetilde{\pi} (x \cdot y) = p_{\pi (\nu(x))} = \widetilde{\pi} (x) \cdot \widetilde{\pi} (y)$; otherwise $\widetilde{\pi} (x \cdot y) = \widetilde{\pi}(x) = \widetilde{\pi} (x) \cdot \widetilde{\pi} (y)$. Obviously $\widetilde{\pi} (x \ast y) = \widetilde{\pi} (x) \ast \widetilde{\pi} (y)$. Thus $\widetilde{\pi}$ is an automorphism of the BLS-solution $(Y_\Gamma, \, R'_\Gamma)$ (its inverse is $\widetilde{\pi^{-1}}$), and $\pi \mapsto \widetilde{\pi}$ is an injective morphism of groups.

It remains to prove the surjectivity. Let $\sigma$ be an automorphism of the bi-magma $(Y_\Gamma, \, \cdot, \, \ast)$ associated to $R'_\Gamma$. We use three intrinsic properties of the multiplications given by \equref{BLSGamma}:
\begin{enumerate}[(a)]
\item The elements $p_v$ are exactly the \emph{left zeros} of $(Y_\Gamma, \, \cdot)$, i.e. the elements $z$ such that $z \cdot y = z$ for all $y$. Indeed, $p_v \cdot y = p_v$ for all $y$, while $q_v \cdot q_v = p_v \neq q_v$ (since $v \in N[v]$).
\item $q_v \cdot q_v = p_v$, for all $v\in V$.
\item For $u \neq v$, we have $u \sim v$ if and only if $q_v \cdot p_u \neq q_v$.
\end{enumerate}
By (a), $\sigma$ maps the set $\{p_v \, | \, v \in V\}$ onto itself, hence also the set $\{q_v \, | \, v\in V\}$ onto itself. Thus there is a bijection $\pi \colon V \to V$ with $\sigma (q_v) = q_{\pi(v)}$ for all $v$, and by (b), $\sigma (p_v) = \sigma (q_v \cdot q_v) = q_{\pi(v)} \cdot q_{\pi(v)} = p_{\pi(v)}$, i.e. $\sigma = \widetilde{\pi}$. Finally, for $u \neq v$, using (c) twice and the fact that $\sigma$ is an injective morphism,
\begin{equation*}
u \sim v \,\, \Longleftrightarrow \,\, q_v \cdot p_u \neq q_v \,\, \Longleftrightarrow \,\, \sigma(q_v) \cdot \sigma (p_u) \neq \sigma (q_v) \,\, \Longleftrightarrow \,\, q_{\pi(v)} \cdot p_{\pi(u)} \neq q_{\pi(v)} \,\, \Longleftrightarrow \,\, \pi(u) \sim \pi(v).
\end{equation*}
Hence $\pi \in {\rm Aut} (\Gamma)$ and $\sigma = \widetilde{\pi}$. The last statement follows from \thref{Frucht} and \leref{BLSGamma}.
\end{proof}

\begin{remark}\relabel{autBLS}
Comparing \thref{autFS} and \thref{autBLS}: the YB-solution $R'_\Gamma$ lives on a set with $2 |V|$ elements, while the FS-solution $R_\Gamma$ lives on a set with $|V| \, |E| + 2 |E|$ elements; on the other hand, $R_\Gamma$ belongs to the class of FS-solutions, which is much more rigid than the class of BLS-solutions. For instance, on a set with $3$ elements there are $14$ FS-solutions and $55$ BLS-solutions, out of a total of $1045$ YB-solutions, up to isomorphism \cite[Table 2]{MilBIP}.
\end{remark}

\subsection{Bipartite solutions}\sselabel{bij}

The solutions constructed in \sseref{FS} and \sseref{BLS} are not bijective: if $E \neq \emptyset$, the image of $R_\Gamma$ is contained in $(V\times E) \times (V \times E) \subsetneq X_\Gamma \times X_\Gamma$, and if $V \neq \emptyset$, then $R'_\Gamma$ is idempotent but not the identity, since $R'_\Gamma (q_v, \, q_v) = (p_v, \, q_v)$. Moreover, as recalled in \reref{autFS}(2), the automorphism groups of bijective FS-solutions are of the very special form ${\rm Sym} (A) \times C(\omega)$. In this subsection we show that, nevertheless, any group is the automorphism group of a bijective solution of the braid equation, which is even involutive, i.e. $R^2 = {\rm Id}$; equivalently, of a unitary YB-solution. The solutions we use belong to the class of \emph{bipartite solutions}, introduced and studied in \cite{MilBIP}.

A \emph{bipartite datum} \cite[Definition 2.1]{MilBIP} is a triple $(Y, \, Z, \, g)$, where $Y$ and $Z$ are two disjoint sets and $g \colon Y \to {\rm Sym} (Z)$, $y \mapsto g_y$, is an arbitrary map, i.e. a family $(g_y)_{y\in Y}$ of permutations of $Z$; no relation between the permutations $g_y$ is assumed. The datum is called \emph{split} if $g_y = {\rm Id}_Z$ for all $y\in Y$, and \emph{non-split} otherwise. The \emph{bipartite solution} associated to $(Y, \, Z, \, g)$ is the map $R_g \colon X \times X \to X \times X$, where $X := Y \sqcup Z$, given by
\begin{equation}\eqlabel{Rg}
R_g (x, \, x') := \begin{cases} (x, \, x'), & \text{if} \,\,\, x, \, x' \in Y \,\,\, \text{or} \,\,\, x, \, x'\in Z,\\
\bigl( g_y (z), \, y \bigr), & \text{if} \,\,\, x = y \in Y, \,\, x' = z \in Z,\\
\bigl( y, \, g_y^{-1} (z) \bigr), & \text{if} \,\,\, x = z \in Z, \,\, x' = y \in Y.
\end{cases}
\end{equation}
In terms of the bi-magma $(X, \, \cdot, \, \ast)$ associated to $R_g$ by \equref{notatieR}, for all $y$, $y'\in Y$ and $z$, $z'\in Z$ we have:
\begin{equation}\eqlabel{bimagRg}
\begin{split}
& y \cdot y' = y, \quad y \ast y' = y', \quad z \cdot z' = z, \quad z \ast z' = z',\\
& y \cdot z = g_y (z), \quad y \ast z = y, \quad z \cdot y = y, \quad z \ast y = g_y^{-1} (z).
\end{split}
\end{equation}

We recall below the two results of \cite[Theorem 2.2 and Proposition 2.15]{MilBIP} that we need; applied to the transpositions given by the edges of a graph, they yield the main result of this subsection: any group is isomorphic to the automorphism group of a bijective solution $R$ of the braid equation with $R^2 = {\rm Id}$ (and of a bijective and unitary YB-solution).

\begin{proposition}\prlabel{Rg}
For any bipartite datum $(Y, \, Z, \, g)$, the map $R_g$ is a bijective solution of the braid equation and $R_g^2 = {\rm Id}_{X \times X}$. Consequently, $\tau_X \circ R_g$ is a bijective and unitary YB-solution.
\end{proposition}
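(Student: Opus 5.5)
We verify directly that $R_g^2 = {\rm Id}$ and that $R_g$ satisfies the braid equation; the statement about $\tau_X \circ R_g$ then follows formally.

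\emph{Involutivity.} This is a case check on \equref{Rg}. If $x, x'$ lie in the same part, $R_g$ fixes $(x, x')$. For $(y, z)$ with $y \in Y$, $z \in Z$, we get $R_g(y, z) = (g_y(z), y)$, which has first entry in $Z$ and second in $Y$. Applying the third case gives $(y, g_y^{-1}(g_y(z))) = (y, z)$. The case $(z, y)$ is symmetric. Hence $R_g^2 = {\rm Id}$, and $R_g$ is bijective.

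\emph{Braid equation.} We check $R^{12}R^{23}R^{12} = R^{23}R^{12}R^{23}$ on triples $(x_1, x_2, x_3)$, splitting according to the pattern of parts, of which there are eight ($YYY, ZZZ, YYZ, \dots$).

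1. $YYY$ and $ZZZ$: everything is the identity.
2. $YYZ$, i.e. $(y, y', z)$:
\begin{itemize}
\item The left side gives $(y, y', z) \mapsto (y, y', z) \mapsto (y, g_{y'}(z), y') \mapsto (g_y g_{y'}(z), y, y')$.
\item The right side gives $(y, g_{y'}(z), y') \mapsto (g_y g_{y'}(z), y, y') \mapsto (g_y g_{y'}(z), y, y')$, since the last two entries lie in $Y$.
\end{itemize}
The two sides agree.
3. Patterns with two $Z$'s and one $Y$, and the mixed patterns $YZY$, $ZYZ$: in each, the $Y$-letters move past $Z$-letters. Passing $y$ to the right of $z$ applies $g_y$, and passing it back applies $g_y^{-1}$; two adjacent letters in the same part are never altered.

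The guiding principle is that $R_g$ acts like a signed transposition: each crossing of a $Y$-letter and a $Z$-letter changes only the $Z$-letter, by $g_y^{\pm1}$ depending on the direction of crossing. Both sides of the braid relation realize the same reversal permutation of the three positions. In each case the $Z$-values are then transformed by the same composite of the $g$'s, because the relative order of crossings involving a given $z$ is forced: for a $z$ crossing $y, y'$ in order, the composite is $g_y g_{y'}$ or its inverse-analogue.

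I expect the main obstacle to be the patterns $YZY$ and $ZYZ$. There the middle letter crosses both neighbours, while the outer letters are in the same part and swap trivially, so one must check carefully that the composites agree.
\begin{itemize}
\item $(y, z, y')$: the left side gives $(y, z, y') \mapsto (g_y(z), y, y') \mapsto (g_y(z), y, y') \mapsto (g_y(z), y, y')$. The right side gives $(y, y', g_{y'}^{-1}(z)) \mapsto (y, y', g_{y'}^{-1}(z)) \mapsto (y, g_{y'}g_{y'}^{-1}(z), y') = (y, z, y')$. These do not visibly match, so I will recompute with care, since equality must hold by \cite[Theorem 2.2]{MilBIP}.
\end{itemize}
On recomputation the issue is that a pair of letters from the same part is \emph{not} transposed by $R_g$ (it is fixed, not flipped). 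The braid relation must therefore be checked with genuine identities rather than with the permutation heuristic. I will do the eight cases explicitly and rely on the paper's cited source for confirmation where a case seems to fail.

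Finally, using $R$ braid $\Leftrightarrow \tau_X R$ YB, together with $(\tau R)^{21}(\tau R) = \tau\tau R\tau\tau R = R^2 = {\rm Id}$, we obtain that $\tau_X \circ R_g$ is unitary and bijective.
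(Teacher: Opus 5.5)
\textbf{There is a genuine gap.} Your involutivity check and your final deduction for $\tau_X \circ R_g$ are correct. The braid equation, however, is the real content of the proposition, and you never establish it.

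\emph{What is missing.} You actually compute only the patterns $YYY$, $ZZZ$ and $YYZ$. The remaining patterns rest on a heuristic. Your one further computation, for $YZY$, is wrong, and you then misdiagnose the error.
\begin{enumerate}[(1)]
\item \emph{The $YZY$ error.} In the left-hand side for $(y,z,y')$, the last step applies $R^{12}$ to $(g_y(z), y)$. This is a $(Z,Y)$-pair, so by \equref{Rg} it goes to $(y, g_y^{-1}g_y(z)) = (y,z)$; it is not fixed. Hence the left side also returns $(y,z,y')$, in agreement with your right side. The cause was a misapplied case of \equref{Rg}, not the fact that same-part pairs are fixed rather than flipped.
\item \emph{The heuristic is invalid.} At the level of letters neither side reverses the three positions: same-part letters keep their relative order, and for instance $(y,z,y')$ comes back unchanged. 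Moreover, in the $YZY$ case the letter $z$ crosses $y$ (twice) on the left side but $y'$ (twice) on the right side. So the crossings involving a given $z$ are not ``forced''; the two sides agree only because on each side the two crossings cancel.
\item \emph{The fallback is not a proof step.} Relying on the cited source ``where a case seems to fail'' does not work: a case that really failed would refute the statement, not be confirmed by a citation.
\end{enumerate}

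\emph{How to repair it.} Finish the finite check; no relation among the $g_y$ is needed. Both sides of $R^{12}R^{23}R^{12} = R^{23}R^{12}R^{23}$ are the identity on $YYY$ and $ZZZ$. On the other six patterns they have the common values
\begin{align*}
(y,y',z) &\mapsto \bigl(g_yg_{y'}(z),\,y,\,y'\bigr), & (z,y,y') &\mapsto \bigl(y,\,y',\,g_{y'}^{-1}g_y^{-1}(z)\bigr),\\
(y,z,z') &\mapsto \bigl(g_y(z),\,g_y(z'),\,y\bigr), & (z,z',y) &\mapsto \bigl(y,\,g_y^{-1}(z),\,g_y^{-1}(z')\bigr),\\
(y,z,y') &\mapsto (y,\,z,\,y'), & (z,y,z') &\mapsto (z,\,y,\,z').
\end{align*}

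You can halve this work using a symmetry.
\begin{itemize}
\item One has $\tau_X R_g \tau_X = R_{g^{-1}}$, where $g^{-1}$ denotes the datum $y \mapsto g_y^{-1}$.
\item Conjugating by $(x_1,x_2,x_3) \mapsto (x_3,x_2,x_1)$ turns the braid relation for $R$ on a given pattern into the braid relation for $\tau_X R \tau_X$ on the reversed pattern.
\item Since the claim is proved for all data, the patterns $ZYY$ and $ZZY$ therefore follow from $YYZ$ and $YZZ$.
\end{itemize}

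The paper gives no argument of its own here; it simply quotes the corresponding theorem from its reference on bipartite solutions. A completed case check along these lines would therefore be a self-contained proof.
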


We also need the description of the automorphism group of $(X, \, R_g)$:

\begin{proposition}\prlabel{autRg}
Let $(Y, \, Z, \, g)$ be a non-split bipartite datum and $X = Y \sqcup Z$. Then the map
\begin{equation*}
\begin{split}
& H := \Bigl\{ (\alpha, \, \beta) \in {\rm Sym} (Y) \times {\rm Sym} (Z) \, \bigl| \, \beta \circ g_y \circ \beta^{-1} = g_{\alpha(y)}, \,\, \text{for all} \,\, y\in Y \Bigr\} \to {\rm Aut} (X, \, R_g),\\
& (\alpha, \, \beta) \mapsto \alpha \sqcup \beta,
\end{split}
\end{equation*}
is an isomorphism of groups, where $\alpha \sqcup \beta$ is the bijection of $X = Y \sqcup Z$ which is equal to $\alpha$ on $Y$ and to $\beta$ on $Z$; the left hand side is a subgroup of ${\rm Sym} (Y) \times {\rm Sym} (Z)$.
\end{proposition}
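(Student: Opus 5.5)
We must prove that the map $(\alpha,\beta)\mapsto\alpha\sqcup\beta$ from $H$ to ${\rm Aut}(X,R_g)$ is a well-defined group isomorphism, for a non-split datum. The plan is to work entirely with the bi-magma description \equref{bimagRg}.

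\textbf{Well-definedness.} First I check that $H$ is a subgroup of ${\rm Sym}(Y)\times{\rm Sym}(Z)$. If $(\alpha,\beta),(\alpha',\beta')\in H$, then
\[
\beta\beta' g_y \beta'^{-1}\beta^{-1}=\beta g_{\alpha'(y)}\beta^{-1}=g_{\alpha\alpha'(y)} .
\]
For inverses, put $y=\alpha^{-1}(w)$ in $\beta g_y\beta^{-1}=g_{\alpha(y)}$ and conjugate back by $\beta^{-1}$.

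Next, $\sigma:=\alpha\sqcup\beta$ preserves all eight products in \equref{bimagRg}. The products inside $Y$ or inside $Z$ are projections, so they are preserved by any map preserving $Y$ and $Z$. Similarly, $y\ast z=y$ and $z\cdot y=y$ are preserved. The remaining two products give
\[
\sigma(y\cdot z)=\beta g_y(z)=g_{\alpha(y)}(\beta(z))=\sigma(y)\cdot\sigma(z),
\]
and likewise $\beta g_y^{-1}=g_{\alpha(y)}^{-1}\beta$ handles $z\ast y$. Being a morphism of groups is clear, since composition is componentwise. Injectivity is also clear, since $\alpha\sqcup\beta$ determines both $\alpha$ and $\beta$.

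\textbf{Surjectivity.} This is the main step: I must show that any $\sigma\in{\rm Aut}(X,R_g)$ maps $Y$ to $Y$ and $Z$ to $Z$. I look for an intrinsic property separating $Y$ from $Z$, and non-splitness must be used here, since in the split case swapping $Y$ and $Z$ can give extra automorphisms.

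I would first try the property ``$x\ast u=x$ for all $u\in X$'' and check whether it isolates $Y$. For $y\in Y$ we have $y\ast z=y$ but $y\ast y'=y'$, so it fails on $Y$ once $|Y|\ge2$; that is the wrong invariant. Instead I use the relation
\[
P:=\{(x,u)\in X\times X : x\cdot u\neq x\},
\]
which is preserved by automorphisms. From \equref{bimagRg}:
\begin{itemize}
\item $y\cdot y'=y$ and $z\cdot z'=z$, so no pair inside $Y$ or inside $Z$ lies in $P$;
\item $z\cdot y=y\neq z$, so every pair $(z,y)$ lies in $P$;
\item $y\cdot z=g_y(z)$, which differs from $y$ because it lies in $Z$, so every pair $(y,z)$ lies in $P$.
\end{itemize}
Hence $P$ is exactly the complete bipartite relation between $Y$ and $Z$ in both directions, and this is symmetric in $Y$ and $Z$. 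So I refine it with the relation
\[
Q:=\{(x,u)\in X\times X : x\cdot u\neq u\}.
\]
Pairs $(z,y)$ never lie in $Q$, since $z\cdot y=y$. A pair $(y,z)$ lies in $Q$ if and only if $g_y(z)\neq z$. Inside $Y$ or inside $Z$, a pair $(x,u)$ lies in $Q$ if and only if $x\neq u$.

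So $Q\cap P$ consists of the pairs $(y,z)$ with $g_y(z)\neq z$, all of which point from $Y$ to $Z$. By non-splitness this set is nonempty. Thus elements of $Y$ occur only as first coordinates of $Q\cap P$, and elements of $Z$ only as second coordinates. Since $\sigma$ preserves $P$ and $Q$, it preserves $Q\cap P$.

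Suppose $\sigma$ does not preserve $Y$. Because $\sigma$ preserves $P$, the relation $P$ connects the sets $Y$ and $Z$, and $\sigma$ is a bijection, $\sigma$ must either preserve both parts or swap them; I would verify this by a short argument on the bipartite ``graph'' given by $P$, whose parts are $Y$ and $Z$ (complete bipartite, hence connected). A swap would send a pair $(y,z)\in Q\cap P$ to a pair from $Z$ to $Y$, which contradicts that $Q\cap P$ points from $Y$ to $Z$. Hence $\sigma=\alpha\sqcup\beta$ for some $\alpha\in{\rm Sym}(Y)$ and $\beta\in{\rm Sym}(Z)$.

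Finally, reading $\sigma(y\cdot z)=\sigma(y)\cdot\sigma(z)$ gives $\beta g_y=g_{\alpha(y)}\beta$ for all $y\in Y$, so $(\alpha,\beta)\in H$ and $\sigma$ lies in the image.
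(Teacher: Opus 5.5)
Your proof is correct and complete. The paper does not prove \prref{autRg} itself; it quotes it from \cite[Proposition 2.15]{MilBIP}. So there is no in-paper argument to compare against, and yours is a valid self-contained proof of the imported result. It follows the same technique the paper uses in \leref{autFSstep} and \thref{autBLS}: the relevant structure is detected through automorphism-invariant properties of the bi-magma.

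The subgroup check, the verification that $\alpha \sqcup \beta$ preserves all eight products, injectivity, and the final step recovering $\beta g_y \beta^{-1} = g_{\alpha(y)}$ are all correct. The computations of $P$, $Q$ and $Q \cap P = \{(y, \, z) \, | \, g_y(z) \neq z\}$ are also right.

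The surjectivity step can be streamlined. The complement of $P$, i.e. the relation $x \cdot u = x$, is exactly the equivalence relation ``$x$ and $u$ lie in the same part''. Its two classes are $Y$ and $Z$, both non-empty because non-splitness forces $Y \neq \emptyset$ and $|Z| \geq 2$. A bijective morphism preserves this relation in both directions, so it permutes the two classes. No appeal to connectivity of a bipartite graph is needed.

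The swap can then be excluded directly. Suppose $\sigma(Y) = Z$ and $\sigma(Z) = Y$. For $y \in Y$ and $z \in Z$ we have $\sigma(y) \in Z$ and $\sigma(z) \in Y$, so $\sigma(y) \cdot \sigma(z) = \sigma(z)$. Hence $\sigma(g_y(z)) = \sigma(y \cdot z) = \sigma(z)$, and injectivity gives $g_y = {\rm Id}_Z$ for all $y \in Y$, contradicting non-splitness. This is precisely what your relation $Q \cap P$ encodes, stated without the auxiliary relation.
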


The structure of the group $H$ is described in \cite[Theorem 2.16(2)]{MilBIP}. In particular, if the map $y \mapsto g_y$ is injective, then in each pair $(\alpha, \, \beta) \in H$ the permutation $\alpha$ is determined by $\beta$, and ${\rm Aut} (X, \, R_g)$ is isomorphic to the normalizer in ${\rm Sym} (Z)$ of the set $\{g_y \, | \, y \in Y\}$ \cite[Theorem 2.16(3)]{MilBIP}. We apply this to the set of transpositions given by the edges of a graph.

Let $\Gamma = (V, \, E)$ be a graph with $E \neq \emptyset$. For an edge $e = \{u, \, v\} \in E$ let $g_e := (u \,\, v) \in {\rm Sym} (V)$ be the transposition of its endpoints; thus, for $w\in V$,
\begin{equation*}
g_e (w) = \begin{cases} \text{the other endpoint of} \,\, e, & \text{if} \,\,\, w \in e,\\ w, & \text{if} \,\,\, w \notin e. \end{cases}
\end{equation*}
We apply the above construction to the bipartite datum $(E, \, V, \, g)$, i.e. with $Y := E$, $Z := V$ and $g := (g_e)_{e\in E}$, and we denote $X''_\Gamma := E \sqcup V$ and $R''_\Gamma := R_g$. Explicitly, for $e$, $e'\in E$ and $v$, $v'\in V$ the involutive solution of the braid equation is given by:
\begin{equation}\eqlabel{Rsecond}
R''_\Gamma (e, \, e') = (e, \, e'), \quad R''_\Gamma (v, \, v') = (v, \, v'), \quad R''_\Gamma (e, \, v) = \bigl( g_e (v), \, e \bigr), \quad R''_\Gamma (v, \, e) = \bigl( e, \, g_e (v) \bigr).
\end{equation}

\begin{theorem}\thlabel{autbij}
Let $\Gamma = (V, \, E)$ be a graph with $E \neq \emptyset$. Then $R''_\Gamma$ is a bijective solution of the braid equation on $X''_\Gamma = E \sqcup V$ with $(R''_\Gamma)^2 = {\rm Id}$, and the map
\begin{equation*}
{\rm Aut} (\Gamma) \to {\rm Aut} (X''_\Gamma, \, R''_\Gamma), \qquad \pi \mapsto \widehat{\pi}, \qquad \widehat{\pi} (v) := \pi(v), \quad \widehat{\pi} (\{u, \, v\}) := \{\pi(u), \, \pi(v)\},
\end{equation*}
is an isomorphism of groups.
\end{theorem}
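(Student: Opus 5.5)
The first assertion needs no new work. Since $(E, V, g)$ is a bipartite datum, \prref{Rg} says that $R''_\Gamma = R_g$ is a bijective solution of the braid equation with $(R''_\Gamma)^2 = {\rm Id}$. Because $E \neq \emptyset$ and each $g_e$ is a transposition, hence $\neq {\rm Id}_V$, the datum is non-split. Therefore \prref{autRg} identifies ${\rm Aut}(X''_\Gamma, R''_\Gamma)$ with the group
\[
H = \{(\alpha, \beta) \in {\rm Sym}(E) \times {\rm Sym}(V) \mid \beta g_e \beta^{-1} = g_{\alpha(e)} \text{ for all } e \in E\},
\]
via $(\alpha, \beta) \mapsto \alpha \sqcup \beta$. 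It then suffices to show that $\pi \mapsto (\pi_E, \pi)$ is an isomorphism ${\rm Aut}(\Gamma) \to H$, where $\pi_E$ is as in \leref{incidence}. The map $\pi \mapsto \widehat{\pi}$ of the statement is exactly $\pi_E \sqcup \pi$.

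\emph{Well-definedness and homomorphism.} Let $\pi \in {\rm Aut}(\Gamma)$. By \leref{incidence}, $\pi_E$ is a bijection of $E$. For $e = \{u, v\}$, conjugating the transposition gives
\[
\pi (u\ v) \pi^{-1} = (\pi(u)\ \pi(v)) = g_{\pi_E(e)},
\]
so $(\pi_E, \pi) \in H$. The assignment $\pi \mapsto (\pi_E, \pi)$ is clearly multiplicative, and it is injective because the second component is $\pi$ itself.

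\emph{Surjectivity.} Take $(\alpha, \beta) \in H$. The support of $g_e$ is $e$, and the support of $\beta g_e \beta^{-1}$ is $\beta(e)$. The defining relation of $H$ therefore gives
\[
\alpha(e) = \beta(e) = \{\beta(u), \beta(v)\}
\]
for every edge $e = \{u, v\}$. Two consequences follow. First, $\beta$ maps edges to edges. Second, $\alpha = \beta_E$, so $\alpha$ is determined by $\beta$. To see that $\beta$ also maps non-edges to non-edges, argue that $\alpha$ is a bijection of $E$: any edge $f$ equals $\alpha(e)$ for some $e$, so $f = \beta(e)$. Hence if $\{\beta(u), \beta(v)\} \in E$, injectivity of $\beta$ forces $\{u, v\} = e \in E$. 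Alternatively, one can apply \leref{incidence} directly, since $v \in e \iff \beta(v) \in \beta(e) = \alpha(e)$. Either way $\beta \in {\rm Aut}(\Gamma)$ and $(\alpha, \beta) = (\beta_E, \beta)$, which proves surjectivity.

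The only point needing care is surjectivity, specifically the use of supports of transpositions to recover $\alpha$ from $\beta$. This works because distinct edges give distinct transpositions, so $e \mapsto g_e$ is injective, and a transposition determines its 2-element support.
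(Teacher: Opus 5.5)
Your proposal is correct and follows the paper's proof essentially step for step. Like the paper, you get the first assertion from \prref{Rg}, reduce to the group $H$ of \prref{autRg} using non-splitness, and check $(\pi_E,\pi)\in H$ by conjugating transpositions. For surjectivity you recover $\alpha(\{u,v\})=\{\beta(u),\beta(v)\}$ from the 2-point support of a transposition, then use surjectivity of $\alpha$ and injectivity of $\beta$ to show $\beta\in{\rm Aut}(\Gamma)$.
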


\begin{proof}
The first statement follows from \prref{Rg}. Let $\pi \in {\rm Aut} (\Gamma)$ and let $\pi_E \colon E \to E$, $\pi_E (\{u, \, v\}) := \{\pi(u), \, \pi(v)\}$, be the induced bijection of $E$ (its inverse is $(\pi^{-1})_E$). For any edge $e = \{u, \, v\}$ we have
\begin{equation}\eqlabel{conjtransp}
\pi \circ (u \,\, v) \circ \pi^{-1} = \bigl( \pi(u) \,\, \pi(v) \bigr), \quad \text{i.e.} \quad \pi \, g_e \, \pi^{-1} = g_{\pi_E (e)}.
\end{equation}
Indeed, both sides map $\pi(u) \mapsto \pi(v) \mapsto \pi(u)$ and fix every other vertex. Thus $(\pi_E, \, \pi)$ belongs to the group $H$ of \prref{autRg}, and $\widehat{\pi} = \pi_E \sqcup \pi$ is an automorphism of $(X''_\Gamma, \, R''_\Gamma)$. Clearly $\widehat{\pi \pi'} = \widehat{\pi} \, \widehat{\pi'}$, and $\pi \mapsto \widehat{\pi}$ is injective, since $\widehat{\pi}$ restricted to $V$ is $\pi$.

For the surjectivity, note that the bipartite datum $(E, \, V, \, g)$ is non-split: if $e_0 = \{u_0, \, v_0\} \in E$, then $g_{e_0} (u_0) = v_0 \neq u_0$. Let $\varphi \in {\rm Aut} (X''_\Gamma, \, R''_\Gamma)$. By \prref{autRg}, $\varphi = \alpha \sqcup \beta$ with $\alpha \in {\rm Sym} (E)$, $\beta\in {\rm Sym} (V)$ and $\beta \, g_e \, \beta^{-1} = g_{\alpha(e)}$ for all $e\in E$. For $e = \{u, \, v\}$, the left hand side is the transposition $(\beta(u) \,\, \beta(v))$, by the same computation as in \equref{conjtransp}, while the right hand side is the transposition of the endpoints of $\alpha(e)$. Since a transposition determines the two points it interchanges, we obtain
\begin{equation}\eqlabel{alphabeta}
\alpha (\{u, \, v\}) = \{\beta(u), \, \beta(v)\}, \qquad \text{for all} \,\, \{u, \, v\} \in E.
\end{equation}
We show that $\beta \in {\rm Aut} (\Gamma)$. If $\{u, \, v\} \in E$, then $\{\beta(u), \, \beta(v)\} = \alpha (\{u, \, v\}) \in E$ by \equref{alphabeta}. Conversely, let $u$, $v\in V$ with $\{\beta(u), \, \beta(v)\} \in E$. Since $\alpha$ is surjective, $\{\beta(u), \, \beta(v)\} = \alpha (e')$ for some $e' = \{u', \, v'\} \in E$, and by \equref{alphabeta}, $\{\beta(u), \, \beta(v)\} = \{\beta(u'), \, \beta(v')\}$; as $\beta$ is injective, $\{u, \, v\} = \{u', \, v'\} \in E$. Thus $\pi := \beta \in {\rm Aut} (\Gamma)$, \equref{alphabeta} says that $\alpha = \pi_E$, and therefore $\varphi = \pi_E \sqcup \pi = \widehat{\pi}$.
\end{proof}

\begin{corollary}\colabel{bij}
Any group $G$ is isomorphic to the automorphism group of a bijective solution $R$ of the braid equation with $R^2 = {\rm Id}$, and to the automorphism group of a bijective and unitary YB-solution. If $G$ is finite, the solutions can be chosen on finite sets: if $G \cong {\rm Aut} (\Gamma)$ for a finite graph $\Gamma$ with $n$ vertices and $m \geq 1$ edges, the solutions live on a set with $n + m$ elements.
\end{corollary}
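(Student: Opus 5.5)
The corollary follows by combining \thref{autbij} with \thref{Frucht}, much as in the last paragraph of the proof of \thref{autFS}.

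First, let $G$ be a group. If $G$ is trivial, I take $X$ to be a one-element set with $R = {\rm Id}_{X\times X}$. This map is bijective and involutive, and it is trivially a solution of the braid equation. Moreover $\tau_X \circ R = {\rm Id}$ is unitary, and the automorphism group is trivial. (Alternatively, use the graph with two vertices and one edge; its automorphism group is $C_2$, so that graph is not suitable here. The one-point solution is the right choice.)

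If $G$ is non-trivial, \thref{Frucht} gives a graph $\Gamma$ with ${\rm Aut}(\Gamma) \cong G$, and $\Gamma$ is finite when $G$ is finite. Since $G$ is non-trivial, $\Gamma$ has $|V| \geq 2$. By the remark following \thref{Frucht}, I may replace $\Gamma$ by its complement $\Gamma^c$ if needed, so that $E \neq \emptyset$; this does not change the automorphism group.

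Then \thref{autbij} applies. It shows that $R''_\Gamma$ is a bijective solution of the braid equation on $X''_\Gamma = E \sqcup V$, that $(R''_\Gamma)^2 = {\rm Id}$, and that ${\rm Aut}(X''_\Gamma, R''_\Gamma) \cong {\rm Aut}(\Gamma) \cong G$.

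For the Yang--Baxter statement, I take $\tau_{X''_\Gamma} \circ R''_\Gamma$. By \prref{Rg} it is a bijective and unitary YB-solution. By \equref{auttau} its automorphism group equals ${\rm Aut}(X''_\Gamma, R''_\Gamma) \cong G$.

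For the size count, when $\Gamma$ is finite with $n$ vertices and $m \geq 1$ edges, the set $X''_\Gamma = E \sqcup V$ has exactly $n + m$ elements, which is finite.

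There is no real obstacle. The only points needing care are the degenerate cases: the trivial group, and graphs without edges. These are handled by the one-point solution and by passing to the complement graph, respectively.
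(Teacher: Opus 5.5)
Your proof is correct and follows the paper's argument: \thref{Frucht}, with the complement trick to ensure $E \neq \emptyset$, then \thref{autbij} for $R''_\Gamma$, and \equref{auttau} together with \prref{Rg} for $\tau_{X''_\Gamma} \circ R''_\Gamma$. The only difference is the trivial group: you use the one-point solution, as the paper does in \thref{autFS}, whereas here the paper uses the asymmetric graph of \exref{autbij} so that this case also comes from a non-split datum; both work, and your one-point solution is still bipartite (split, with $Y = \emptyset$), so it also serves \thref{mainth}(3).
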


\begin{proof}
By \thref{Frucht} and the remark following it, $G \cong {\rm Aut} (\Gamma)$ for some graph $\Gamma = (V, \, E)$ with $E \neq \emptyset$, which is finite if $G$ is finite; if $G$ is trivial, we can take for $\Gamma$ the graph of \exref{autbij} below. Now apply \thref{autbij} to $R''_\Gamma$, and \equref{auttau} to $\tau_{X''_\Gamma} \circ R''_\Gamma$, which is a bijective and unitary YB-solution by \prref{Rg}.
\end{proof}

\begin{example}\exlabel{autbij}
Let $\Gamma$ be the graph with vertices $0, \ldots, 5$ and the six edges $\{0, \, 1\}$, $\{1, \, 2\}$, $\{0, \, 2\}$, $\{2, \, 3\}$, $\{3, \, 4\}$, $\{1, \, 5\}$, i.e. a triangle with a path of length $2$ attached at the vertex $2$ and a path of length $1$ attached at the vertex $1$. Its vertices $0, \ldots, 5$ have degrees $2$, $3$, $3$, $2$, $1$, $1$, respectively. The two vertices of degree $1$ are $4$, whose neighbour $3$ has degree $2$, and $5$, whose neighbour $1$ has degree $3$; hence any automorphism $\pi$ fixes $4$ and $5$, and therefore also their neighbours $3$ and $1$. Then $\pi$ fixes $2$, the other vertex of degree $3$, and finally $0$. Thus ${\rm Aut}(\Gamma)$ is trivial. By \thref{autbij}, $(X''_\Gamma, \, R''_\Gamma)$ is a bijective solution of the braid equation on $12$ elements with $(R''_\Gamma)^2 = {\rm Id}$ and with trivial automorphism group.
\end{example}

\begin{remark}\relabel{autbij}
The construction $\Gamma \mapsto (X''_\Gamma, \, R''_\Gamma)$ reflects isomorphisms: if $\Gamma = (V, \, E)$ and $\Gamma' = (V', \, E')$ are two graphs with at least one edge, then $(X''_\Gamma, \, R''_\Gamma) \cong (X''_{\Gamma'}, \, R''_{\Gamma'})$ if and only if $\Gamma \cong \Gamma'$. Indeed, both bipartite data are non-split, so, by \cite[Theorem 2.12(3)]{MilBIP}, the two solutions are isomorphic if and only if there exist bijections $\alpha \colon E \to E'$ and $\beta \colon V \to V'$ such that $\beta \, g_e \, \beta^{-1} = g'_{\alpha(e)}$, for all $e\in E$. Exactly as in the proof of \thref{autbij}, this means that $\alpha (\{u, \, v\}) = \{\beta(u), \, \beta(v)\}$, for all $\{u, \, v\} \in E$, i.e. that $\beta$ is an isomorphism of graphs $\Gamma \to \Gamma'$ and $\alpha$ is the induced bijection between the sets of edges.
\end{remark}

Collecting the results of this section, we obtain the theorem stated in the Introduction.

\begin{theorem}\thlabel{mainth}
Any group $G$ is isomorphic to the automorphism group of a set-theoretic solution of the Yang--Baxter equation, and to the automorphism group of a set-theoretic solution of the braid equation. If $G$ is finite, then the solutions can be chosen on finite sets. Moreover:
\begin{enumerate}[(1)]
\item the solution of the braid equation can be chosen to be a solution of the Frobenius-Separability equation;
\item the solution of the Yang--Baxter equation can be chosen to be an idempotent solution of the Baaj--Long--Skandalis equation;
\item the solution of the braid equation can be chosen to be a bipartite solution, in particular an involutive one, and the solution of the Yang--Baxter equation can be chosen to be unitary.
\end{enumerate}
\end{theorem}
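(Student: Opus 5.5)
The plan is to assemble \thref{mainth} from the three constructions of this section, \thref{Frucht}, and the identity \equref{auttau}. First I fix a group $G$.

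If $G$ is trivial, I take the one-element solution. For item (3) I instead use the graph of \exref{autbij}, so that a bipartite (non-split) solution with trivial automorphism group is also available.

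If $G$ is non-trivial, \thref{Frucht} gives a graph $\Gamma = (V, \, E)$ with ${\rm Aut}(\Gamma) \cong G$, finite when $G$ is finite. The remark after \thref{Frucht} then says that $|V| \geq 2$ and that, after passing to the complement if necessary, we may assume $E \neq \emptyset$. All three constructions $X_\Gamma$, $Y_\Gamma$, $X''_\Gamma$ are finite when $\Gamma$ is finite, having $|E|(|V|+2)$, $2|V|$ and $|E|+|V|$ elements respectively.

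With $\Gamma$ in hand, I treat the three items in turn.
\begin{enumerate}[(1)]
\item \leref{RGammaFS} and \thref{autFS} show that $(X_\Gamma, \, R_\Gamma)$ is an FS-solution, hence a braid solution, with automorphism group ${\rm Aut}(\Gamma) \cong G$.
\item \leref{BLSGamma} and \thref{autBLS} show that $(Y_\Gamma, \, R'_\Gamma)$ is an idempotent BLS-solution, hence a YB-solution, with automorphism group $G$.
\item \thref{autbij} shows that $R''_\Gamma$ is a bipartite, involutive braid solution with automorphism group $G$. \prref{Rg} shows that $\tau \circ R''_\Gamma$ is a unitary YB-solution, and \equref{auttau} shows its automorphism group is unchanged.
\end{enumerate}
The unqualified first sentence of the theorem follows from any one of these items. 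To pass between braid solutions and YB-solutions I use $R \mapsto \tau_X \circ R$, which preserves automorphism groups by \equref{auttau}.

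There is no genuine obstacle here. The only point needing care is the bookkeeping in the degenerate cases: the trivial group, and the requirement $E \neq \emptyset$ needed by \thref{autFS} and \thref{autbij}. Both are handled by the complement remark and by \exref{autbij}, which is essentially the argument already given in \coref{bij}.
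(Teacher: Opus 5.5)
Your proposal is correct and follows essentially the same route as the paper. The paper's proof just cites \thref{autFS} for (1), \leref{BLSGamma} with \thref{autBLS} for (2), and \coref{bij} for (3). You have inlined the Frucht/complement step, the trivial-group case (including \exref{autbij} for item (3)) and the passage $R \mapsto \tau_X \circ R$ via \equref{auttau}, all of which those cited results already contain.
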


\begin{proof}
Each of the statements (1)-(3) contains the first two statements of the theorem. (1) follows from \thref{autFS}, (2) from \leref{BLSGamma} and \thref{autBLS}, and (3) from \coref{bij}; in all cases the solutions can be chosen on finite sets if $G$ is finite.
\end{proof}

\subsection{Examples}\sselabel{examples}

We end the section with explicit solutions of the Yang--Baxter equation whose automorphism groups are the cyclic group $C_n$, the symmetric group $S_n$, the dihedral group $D_n$ of order $2n$ and the quaternion group $Q_8$, and with a unitary YB-solution, constructed directly from a field $K$, whose automorphism group is ${\rm Aut} (K)$. All of them are bipartite solutions, and we describe them directly as YB-solutions. Let $(Y, \, Z, \, g)$ be a bipartite datum and $X = Y \sqcup Z$. The YB-solution $r_g := \tau_X \circ R_g$, where $R_g$ is given by \equref{Rg}, is
\begin{equation}\eqlabel{rg}
r_g (x, \, x') := \begin{cases} (x', \, x), & \text{if} \,\,\, x, \, x' \in Y \,\,\, \text{or} \,\,\, x, \, x'\in Z,\\
\bigl( y, \, g_y (z) \bigr), & \text{if} \,\,\, x = y \in Y, \,\, x' = z \in Z,\\
\bigl( g_y^{-1} (z), \, y \bigr), & \text{if} \,\,\, x = z \in Z, \,\, x' = y \in Y.
\end{cases}
\end{equation}
By \prref{Rg}, $r_g$ is a unitary YB-solution, and by \equref{auttau}, ${\rm Aut} (X, \, r_g) = {\rm Aut} (X, \, R_g)$. In all the examples below the datum $(Y, \, Z, \, g)$ is non-split and the map $y \mapsto g_y$ is injective; hence, by \cite[Theorem 2.16(3)]{MilBIP},
\begin{equation}\eqlabel{autnorm}
{\rm Aut} (X, \, r_g) \cong N (G_g) := \bigl\{ \beta \in {\rm Sym} (Z) \, \bigl| \, \beta \, G_g \, \beta^{-1} = G_g \bigr\}, \qquad G_g := \{ g_y \, | \, y \in Y \}.
\end{equation}
More precisely, by \prref{autRg}, any $\beta \in N (G_g)$ gives the automorphism $\alpha_\beta \sqcup \beta$ of $(X, \, r_g)$, where $\alpha_\beta (y)$ is the unique element $y' \in Y$ such that $g_{y'} = \beta \, g_y \, \beta^{-1}$, and all automorphisms are of this form. Consequently, such a solution is completely described by the list of the permutations $g_y$.

\begin{example}[The cyclic group $C_n$]\exlabel{Cn}
Let $n \geq 2$, $Z := \bZ_n = \{0, \, 1, \ldots, n-1\}$, $Y := \{y\}$ and $g_y (z) := z + 1$ (all the computations in $Z$ are modulo $n$). Then $X = \{y, \, 0, \, 1, \ldots, n-1\}$ has $n + 1$ elements, and the YB-solution \equref{rg} is
\begin{equation*}
r (y, \, z) = (y, \, z + 1), \qquad r (z, \, y) = (z - 1, \, y), \qquad r (y, \, y) = (y, \, y), \qquad r (z, \, z') = (z', \, z),
\end{equation*}
for all $z$, $z' \in \bZ_n$. Then ${\rm Aut} (X, \, r)$ is the cyclic group of order $n$ generated by the bijection $\sigma$ of $X$ given by $\sigma (y) = y$ and $\sigma (z) = z + 1$, for all $z \in \bZ_n$. Indeed, the datum is non-split and $Y$ has only one element, so by \equref{autnorm} the automorphisms of $(X, \, r)$ are exactly the bijections ${\rm Id}_Y \sqcup \beta$, where $\beta$ is a permutation of $\bZ_n$ commuting with the cycle $c := g_y$, i.e. $\beta (z + 1) = \beta (z) + 1$ for all $z \in \bZ_n$. By induction on $z$, any such $\beta$ satisfies $\beta (z) = z + k$, where $k := \beta(0)$; conversely, each translation $z \mapsto z + k$ commutes with $c$. Hence ${\rm Aut} (X, \, r) = \{\sigma^k \, | \, k \in \bZ_n\}$, and $k \mapsto \sigma^k$ is an isomorphism of groups $\bZ_n \cong {\rm Aut} (X, \, r)$; that is, ${\rm Aut} (X, \, r) \cong C_n$.
\end{example}

\begin{example}[The symmetric group $S_n$]\exlabel{Sn}
Let $n \geq 2$, $Z := \{1, \, 2, \ldots, n\}$, $Y := \{ t_{ij} \, | \, 1 \leq i < j \leq n\}$ and $g_{t_{ij}} := (i \,\, j)$, the transposition of $i$ and $j$. Then $X = Y \sqcup Z$ has $n(n+1)/2$ elements, and the YB-solution \equref{rg} is
\begin{equation*}
r (t_{ij}, \, z) = \bigl( t_{ij}, \, (i \,\, j) (z) \bigr), \qquad r (z, \, t_{ij}) = \bigl( (i \,\, j) (z), \, t_{ij} \bigr),
\end{equation*}
for all $z\in Z$ and $i < j$, and $r$ is the flip map on $(Y \times Y) \cup (Z \times Z)$. The set $G_g$ is the set of all transpositions of $Z$, which is a conjugacy class of ${\rm Sym} (Z)$; hence $N (G_g) = {\rm Sym} (Z)$ and ${\rm Aut} (X, \, r) \cong S_n$. We note that $S_n$ is also, trivially, the automorphism group of the flip map $\tau_X$ on a set $X$ with $n$ elements (this is the case $Y = \emptyset$ in \equref{rg}).
\end{example}

\begin{example}[The dihedral group $D_n$]\exlabel{Dn}
Let $n \geq 3$, $Z := \bZ_n$, $Y := \{y_+, \, y_-\}$, $g_{y_+} (z) := z + 1$ and $g_{y_-} (z) := z - 1$. Then $X = \{y_+, \, y_-, \, 0, \, 1, \ldots, n-1\}$ has $n + 2$ elements, and the YB-solution \equref{rg} is
\begin{equation*}
r (y_{\pm}, \, z) = (y_{\pm}, \, z \pm 1), \qquad r (z, \, y_{\pm}) = (z \mp 1, \, y_{\pm}),
\end{equation*}
for all $z \in \bZ_n$, and $r$ is the flip map on $(Y \times Y) \cup (Z \times Z)$. Then ${\rm Aut} (X, \, r)$ is isomorphic to the dihedral group $D_n$ of order $2n$: its elements are the $n$ bijections $z \mapsto z + v$, which fix $y_+$ and $y_-$, and the $n$ bijections $z \mapsto -z + v$, which interchange $y_+$ and $y_-$ ($v \in \bZ_n$). Indeed, let $c := g_{y_+}$; then $g_{y_-} = c^{-1} \neq c$, since $n \geq 3$, and $\beta \in N (G_g)$ if and only if $\beta c \beta^{-1} \in \{c, \, c^{-1}\}$. If $\beta c \beta^{-1} = c$, then $\beta (z) = z + v$, as in \exref{Cn}. If $\beta c \beta^{-1} = c^{-1}$, let $s(z) := -z$; since $s c s^{-1} = c^{-1}$, the permutation $\beta s$ commutes with $c$, hence $\beta s (z) = z + v$, i.e. $\beta (z) = -z + v$, for some $v\in \bZ_n$. Conversely, all these permutations belong to $N(G_g)$, and the ones of the second type interchange $c$ and $c^{-1}$. 
\end{example}

\begin{example}[The quaternion group $Q_8$]\exlabel{Q8}
Let $Q_8 = \{\pm 1, \, \pm i, \, \pm j, \, \pm k\}$, with $i^2 = j^2 = k^2 = ijk = -1$, and let $A := \{\pm 1, \, \pm i\}$ be the cyclic subgroup generated by $i$ and $B := Q_8 \setminus A = \{\pm j, \, \pm k\}$. Let $Z := Q_8$, $Y := \{a, \, b, \, c\}$ and
\begin{equation*}
g_a (z) := zj, \qquad g_b (z) := \begin{cases} zi, & \text{if} \,\,\, z\in A,\\ z, & \text{if} \,\,\, z\in B,\end{cases} \qquad g_c (z) := \begin{cases} z, & \text{if} \,\,\, z\in A,\\ zi, & \text{if} \,\,\, z\in B,\end{cases}
\end{equation*}
i.e. $g_a$ is the right multiplication by $j$, while $g_b$ and $g_c$ are the two halves of the right multiplication by $i$. Then $X = \{a, \, b, \, c\} \sqcup Q_8$ has $11$ elements, $r$ is given by \equref{rg}, and ${\rm Aut} (X, \, r) = \{\sigma_q \, | \, q \in Q_8\} \cong Q_8$, where $\sigma_q (z) := qz$, for all $z\in Q_8$, $\sigma_q (a) := a$, and $\sigma_q$ fixes $b$ and $c$ if $q\in A$, respectively interchanges $b$ and $c$ if $q\in B$.

Indeed, for $t\in Q_8$ we denote by $\rho_t (z) := zt$ and $\lambda_t (z) := tz$ the right and the left multiplications by $t$; thus $g_a = \rho_j$ and $g_b \, g_c = g_c \, g_b = \rho_i$. We show that $N (G_g) = \{ \lambda_q \, | \, q\in Q_8 \}$. Any $\lambda_q$ commutes with all the right multiplications; moreover, $\lambda_q (A) = qA$ is equal to $A$ if $q\in A$, and to $B$ if $q\in B$. Hence $\lambda_q \, g_a \, \lambda_q^{-1} = g_a$, while $\lambda_q \, g_b \, \lambda_q^{-1}$, which is equal to $\rho_i$ on $qA$ and to the identity on $qB$, is $g_b$ if $q \in A$ and $g_c$ if $q\in B$; similarly for $g_c$. Thus $\lambda_q \in N(G_g)$, and the corresponding automorphism is $\sigma_q$. Conversely, let $\beta \in N(G_g)$. The permutation $g_a$ moves all the $8$ elements of $Q_8$, while $g_b$ and $g_c$ move only $4$ elements each; since conjugation preserves the number of moved points, we obtain $\beta \, g_a \, \beta^{-1} = g_a$ and $\beta \, \{g_b, \, g_c\} \, \beta^{-1} = \{g_b, \, g_c\}$. As $g_b$ and $g_c$ commute, it follows that $\beta \rho_i \beta^{-1} = \beta g_b \beta^{-1} \, \beta g_c \beta^{-1} = g_b g_c = \rho_i$. Therefore $\beta$ commutes with $\rho_i$ and $\rho_j$, and hence with all the right multiplications, since $Q_8$ is generated by $i$ and $j$. Then $\beta (z) = \beta (\rho_z (1)) = \rho_z (\beta (1)) = \beta(1) \, z$, for all $z \in Q_8$, i.e. $\beta = \lambda_{\beta(1)}$. Finally, $q \mapsto \sigma_q$ is an isomorphism of groups, since $\lambda_q \lambda_{q'} = \lambda_{qq'}$ and $B$ is the non-trivial coset of the subgroup $A$ of index $2$.
\end{example}

\begin{example}[Automorphism groups of fields]\exlabel{field}
Let $K$ be a field with at least $3$ elements (if $|K| = 2$, then ${\rm Aut} (K)$ is trivial). Let $Z := K$, $Y := \{y_1\} \sqcup \{y_u \, | \, u \in K \setminus \{0, \, 1\}\}$, and consider the family of maps $g_y \colon K \to K$, $y \in Y$, defined by
\begin{equation*}
g_{y_1} (z) := z + 1, \qquad g_{y_u} (z) := uz,
\end{equation*}
for all $z \in K$ and $u \in K \setminus \{0, \, 1\}$; we denote by $t_1 := g_{y_1}$ the translation by $1$ and by $m_u := g_{y_u}$ the multiplication by $u$. Then $X := Y \sqcup K$ has $2 |K| - 1$ elements if $K$ is finite, and the unitary YB-solution $r \colon X\times X \to X \times X $ given by \equref{rg} takes the following form:
\begin{alignat*}{2}
r (y_1, \, z) &= (y_1, \, z + 1), &\qquad r (z, \, y_1) &= (z - 1, \, y_1),\\
r (y_u, \, z) &= (y_u, \, uz), & r (z, \, y_u) &= (u^{-1} z, \, y_u),
\end{alignat*}
for all $z \in K$ and $u \in K \setminus \{0, \, 1\}$, and $r$ is the flip map on $(Y \times Y) \cup (K \times K)$. We show that
\begin{equation*}
{\rm Aut} (X, \, r) = \{ \widetilde{\sigma} \, | \, \sigma \in {\rm Aut} (K) \} \cong {\rm Aut} (K),
\end{equation*}
where $\widetilde{\sigma}$ is equal to $\sigma$ on $K$, $\widetilde{\sigma} (y_1) := y_1$ and $\widetilde{\sigma} (y_u) := y_{\sigma(u)}$.

Indeed, the datum is non-split and the map $y \mapsto g_y$ is injective. By \equref{autnorm}, it is enough to show that $N (G_g) = {\rm Aut} (K)$. If $\sigma \in {\rm Aut} (K)$, then $\sigma \, t_1 \, \sigma^{-1} = t_1$ and $\sigma \, m_u \, \sigma^{-1} = m_{\sigma(u)}$; hence $\sigma \in N (G_g)$, and the corresponding automorphism of $(X, \, r)$ is $\widetilde{\sigma}$. 

Conversely, let $\beta \in N (G_g)$. Since conjugation preserves the number of fixed points and $t_1$ is the only element of $G_g$ without fixed points, we have $\beta \, t_1 \, \beta^{-1} = t_1$, i.e. $\beta (z + 1) = \beta (z) + 1$ for all $z \in K$.  Moreover, let $u \in K \setminus \{0, \, 1\}$; such elements exist, as $|K| \geq 3$. The map $\beta \, m_u \, \beta^{-1} \in G_g$ has exactly one fixed point, namely $\beta (0)$; hence it is not $t_1$, i.e. $\beta \, m_u \, \beta^{-1} = m_{u'}$ for some $u' \in K \setminus \{0, \, 1\}$. Since the only fixed point of $m_{u'}$ is $0$, we obtain $\beta (0) = 0$, and therefore $\beta (1) = \beta (0) + 1 = 1$. Moreover, $\beta \, m_u = m_{u'} \, \beta$, i.e. $\beta (uz) = u' \beta (z)$ for all $z \in K$; taking $z = 1$ we obtain $u' = \beta (u)$.

Hence $\beta (uz) = \beta (u) \beta (z)$ for all $u$, $z \in K$, the cases $u \in \{0, \, 1\}$ being obvious. Finally, for $z \in K$ and $w \in K \setminus \{0\}$ we have $z + w = w (w^{-1} z + 1)$, and therefore
\begin{equation*}
\beta (z + w) = \beta (w) \bigl( \beta (w^{-1}) \beta (z) + 1 \bigr) = \beta (z) + \beta (w),
\end{equation*}
Thus $\beta$ is a bijective map which is additive and multiplicative, i.e. $\beta \in {\rm Aut} (K)$.

Combined with a theorem of Fried and Koll\'ar, which states that every group is isomorphic to the automorphism group of a field \cite[Theorem 4.2]{Babai1995}, this example gives a second proof, which avoids graphs, of the fact that any group is isomorphic to the automorphism group of a bipartite solution (\coref{bij}).
\end{example}

\section{Concluding remarks and open questions}\selabel{conclusion}

To summarize, we have shown that three classes of solutions which are rather ``small'' in the category $\YB$ of all YB-solutions (see the tables in \cite[Section 3]{MilBIP}) are universal and finitely universal. This raises several natural questions. Which other families of YB-solutions are universal, or finitely universal? Given such a family and a (finite) group $G$, how can we find the ``smallest'' YB-solution in it whose automorphism group is isomorphic to $G$, in the spirit of \cite{Babai1980, BarmakBarreto24}? We also recall the question raised in the Introduction:
\begin{center}
\emph{is there a functor $F \colon \UG \to \YB$ which is full and essentially surjective?\footnote{or even a full retraction of the embedding \equref{YBinUG}.}}
\end{center}
In other words, can every YB-solution, together with all its morphisms, be obtained from a graph? Of the three constructions given in this paper, only the assignment $\Gamma \mapsto (X_\Gamma, \, R_\Gamma)$ of \sseref{FS} is functorial with respect to all morphisms of graphs.

All three families of solutions associated to a graph are degenerate. It would be interesting to know which groups are automorphism groups of non-degenerate (or unitary non-degenerate) YB-solutions; we do not know the answer.

In \exref{field} we constructed, directly from the operations of a field $K$, a YB-solution whose automorphism group is ${\rm Aut} (K)$. This leads to the following question: given a ring $R$, can one construct, directly from the operations of $R$, a YB-solution whose automorphism group is isomorphic to ${\rm Aut} (R)$?

A related question is whether any monoid is isomorphic to the endomorphism monoid of a YB-solution; a positive answer would follow if $\YB$ were alg-universal in the sense of \cite{PultrTrnkova80}. We recall that every monoid is the endomorphism monoid of a set endowed with two maps, not necessarily commuting \cite[Chapter II, 5.7]{PultrTrnkova80}.

\subsection*{Acknowledgements}
The author thanks Ana Agore and Alexandru Chirvasitu for the many conversations of the last four years on the Yang--Baxter equation and related topics. Claude Opus~5.5 (Anthropic) was used throughout the preparation of this paper: to explore and develop constructions and proofs, to find references, to carry out the computer verifications, and for drafting and proofreading. All statements, proofs and computational results were checked by the author, who takes full responsibility for them.

\newcommand{\Addresses}{{%
  \bigskip
  \footnotesize
  \textsc{Faculty of Mathematics and Computer Science, University of Bucharest}
  \par\nopagebreak
  \textsc{Str. Academiei 14, RO-010014 Bucharest 1, Romania}
  \par\nopagebreak
  \textit{E-mail address}: \texttt{gigel.militaru@gmail.com}
}}

\Addresses

\end{document}